\documentclass[12pt,a4paper]{amsart}
\usepackage{amssymb, amsmath}
\usepackage{xcolor,hyperref} 
\setlength{\textwidth}{6.3in}
\setlength{\textheight}{8.7in}
\setlength{\topmargin}{0pt}
\setlength{\oddsidemargin}{0pt}
\setlength{\evensidemargin}{0pt}
\hypersetup{
    colorlinks=true,
    citecolor=blue,
    linkcolor=red,
    urlcolor=orange,
}
\newtheorem{thm}{Theorem}
\newtheorem*{prop}{Proposition}
\newtheorem{conj}{Conjecture}
\newtheorem*{prob}{Problem}
\newtheorem{lemma}{Lemma}

\def\FF{\mathcal F}
\def\GG{\mathcal G}
\def\HH{\mathcal H}
\def\QQ{\mathcal R}
\def\RR{\mathcal Q}
\def\TT{\mathcal T}
\title{Strong stability of 3-wise $t$-intersecting families}
\author{Norihide Tokushige}
\address{University of the Ryukyus\\College of Education\\
1 Senbaru Nishihara\\Okinawa\\903-0213 (JAPAN)\\
ORCID: 0000-0002-9487-7545}
\email{hide@edu.u-ryukyu.ac.jp}
\subjclass[2020]{Primary 05D05, Secondary 05C65, 05D40}
\keywords{intersecting family, multiply intersecting family, stability}

\begin{document}
\begin{abstract}
Let $\GG$ be a family of subsets of an $n$-element set. 
The family $\GG$ is called $3$-wise $t$-intersecting if the intersection of
any three subsets in $\GG$ is of size at least $t$.
For a real number $p\in(0,1)$ we define the measure of the family 
by the sum of $p^{|G|}(1-p)^{n-|G|}$ over all $G\in\GG$. 
For example, if $\GG$ consists of all subsets containing a fixed $t$-element
set, then it is a $3$-wise $t$-intersecting family with the measure $p^t$.

Let $0<p\leq 2/(\sqrt{4t+9}-1)$, $\delta>0$, and let $\GG$ be a $3$-wise 
$t$-intersecting family. It is known that the measure of $\GG$ is at most $p^t$. 
Suppose, moreover, that $\GG$ has the measure at least $(\frac12+\delta)p^t$.
We show that, by choosing $t$ sufficiently large depending on $\delta$, 
the structure of $\GG$ is one of (i) and (ii): (i) every subset in $\GG$ contains
a fixed $t$-element set, (ii) every subset in $\GG$ contains
at least $t+2$ elements from a fixed $(t+3)$-element set.
\end{abstract}
\maketitle
\section{Introduction}
Let $X$ be a finite set, and let $2^X$ denote the power set of $X$.
Let $\GG\subset 2^X$ be a family of subsets of $X$\footnote{In this paper
we write $A\subset B$ to mean that $A$ is a subset of $B$,
including the case $A=B$.}. 
For positive integers $r$ and $t$, we say that 
a family $\GG$ is \emph{$r$-wise $t$-intersecting} if 
$|G_1\cap G_2\cap\cdots\cap G_r|\geq t$ for all $G_1,G_2,\ldots, G_r\in\GG$.
We are interested in `large' $r$-wise $t$-intersecting families with respect to
the measure introduced below.
Let $0<p<1$ be a real number and let $q:=1-p$. For a family $\GG\subset 2^X$
we define the $p$-biased measure of $\GG$ on $X$ by
\[
\mu_p(\GG:X):=\sum_{G\in\GG}p^{|G|}q^{|X|-|G|}.
\]
Note that $\mu_p(\GG:X)$ equals the probability ${\mathbb P}(X_p\in\GG)$ where
$X_p$ is a random subset of $X$ obtained by retaining each element of $X$
with probability $p$, independently.
For the case $X=[n]:=\{1,2,\ldots, n\}$
we just write $\mu_p(\GG)$ to mean $\mu_p(\GG:[n])$.

For example, if $r=2$ and $p$ is small, then we have the following
result, which is a special case of the $p$-biased version (see, e.g., Theorem~12.4\footnote{
The full statement of the $p$-biased version of the complete intersection theorem
was perhaps first appeared in \cite{DS,Tuvsw}, 
but the case for $p=\frac1m$, where $m$ is a positive integer, was already obtained 
in \cite{AK-p,FT1999}, see also \cite{BE, Filmus,Friedgut}.
} in \cite{EPFS})
of the complete intersection theorem \cite{AK} due to Ahlswede and Khachatrian.
We say that two families $\GG,\HH\subset 2^{[n]}$ are \emph{isomorphic},
denoted by $\GG\cong\HH$, if there is a permutation $\tau$ on $[n]$
such that $\HH=\{\{\tau(x):x\in G\}:G\in\GG\}$.
\begin{thm}\label{thm1}
Let $0<p<\frac1{t+1}$. If $\GG\subset 2^{[n]}$ is a $2$-wise 
$t$-intersecting family, then $\mu_p(\GG)\leq p^t$. 
Moreover, equality holds if and only if
$\GG$ is isomorphic to $\GG^*:=\{G\subset[n]:[t]\subset G\}$.
\end{thm}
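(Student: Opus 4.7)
The plan is a shifting reduction followed by a case analysis on the minimum set-size. First I would apply the shift operations $\sigma_{ij}$ for $i<j$ iteratively to $\GG$; each such shift preserves the cardinality of every set (hence $\mu_p(\GG)$) and the $2$-wise $t$-intersecting property, so we may assume that $\GG$ is shifted.

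For a shifted family $\GG$, a standard observation is that whenever $G\in\GG$, its fully left-compressed version $[|G|]$ also lies in $\GG$. Setting $k:=\min_{G\in\GG}|G|$, this gives $[k]\in\GG$, and applying the $t$-intersecting condition to $[k]$ and any other member of $\GG$ forces $k\geq t$. In the easy case $k=t$, one has $[t]\in\GG$, and $|[t]\cap G|\geq t$ combined with $|[t]|=t$ forces $[t]\subset G$ for every $G\in\GG$. Hence $\GG\subset\GG^*$, giving $\mu_p(\GG)\leq p^t$ with equality iff $\GG=\GG^*$; un-shifting turns equality into $\GG\cong\GG^*$ in general.

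The main obstacle is the case $k\geq t+1$, where one must prove the \emph{strict} inequality $\mu_p(\GG)<p^t$ (so that the equality characterisation is complete), and this is where $p<1/(t+1)$ is genuinely used. A natural approach is induction on $n$ via the decomposition $\mu_p(\GG)=p\mu_p(\GG(1))+q\mu_p(\GG(\bar 1))$, with $\GG(1):=\{G\setminus\{1\}:1\in G\in\GG\}$ and $\GG(\bar 1):=\{G\in\GG:1\notin G\}$. The key extra ingredient is that $\GG(1)$ and $\GG(\bar 1)$ are cross $t$-intersecting (for $G_1\in\GG(1)$ and $G_0\in\GG(\bar 1)$, the sets $G_1\cup\{1\}$ and $G_0$ both lie in $\GG$, so $|G_1\cap G_0|\geq t$), which supplies a strong cross-intersecting-measure bound and prevents the naive inductive sum $p\cdot p^{t-1}+q\cdot p^t$ from being the correct answer. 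The difficulty is that a direct inductive bound using only the individual intersecting properties of $\GG(1)$ and $\GG(\bar 1)$ is too weak; one really needs the Ahlswede--Khachatrian framework, which identifies the Frankl families $\FF_i:=\{G:|G\cap[t+2i]|\geq t+i\}$ as the extremal shifted families. A direct computation gives $\mu_p(\FF_i)<p^t$ for every $i\geq 1$ whenever $p<1/(t+1)$, with equality at $p=1/(t+1)$ attained by $i=1$; comparing a general shifted $\GG$ with $\min|G|>t$ against the appropriate $\FF_i$ then yields the strict bound and completes the proof.
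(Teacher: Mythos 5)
Your reduction to shifted families and your treatment of the case $\min_{G\in\GG}|G|=t$ are fine, but the heart of the theorem is exactly the case you label as ``the main obstacle,'' and there you do not actually give an argument. Saying that ``one really needs the Ahlswede--Khachatrian framework'' and that ``comparing a general shifted $\GG$ with $\min|G|>t$ against the appropriate $\FF_i$ then yields the strict bound'' is a restatement of the $p$-biased Ahlswede--Khachatrian theorem, not a proof of it: the claim that an arbitrary shifted $2$-wise $t$-intersecting family with no $t$-element member is dominated in measure by one of the Frankl families $\FF_i$ is precisely the hard content (proved via generating sets and the pushing--pulling method, or via Filmus's or Friedgut's analytic arguments). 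Your computation that $\mu_p(\FF_1)/p^t=(t+2)p-(t+1)p^2\le 1$ for $p\le\frac1{t+1}$ is correct, and the cross-$t$-intersecting observation about $\GG(1)$ and $\GG(\bar 1)$ is a reasonable starting point, but you yourself note that the naive induction it suggests is too weak, and you do not replace it with anything. To be fair, the paper does not prove Theorem~\ref{thm1} either --- it quotes it as a known special case of the $p$-biased Ahlswede--Khachatrian theorem --- so there is no in-paper proof to compare against; but judged as a standalone proof, yours is incomplete at its central step.

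A second, smaller gap is the sentence ``un-shifting turns equality into $\GG\cong\GG^*$ in general.'' Shifting can change the isomorphism type of a family, so from $\mu_p(\GG)=p^t$ and the fact that the shifted family equals $\GG^*$ one cannot immediately conclude $\GG\cong\GG^*$. This is exactly the issue the paper confronts in its Lemmas~\ref{lemma:F0} and~\ref{lemma:F1} for the $3$-wise case, where recovering the structure of the original family from its shift requires first passing to a $(3,t)$-maximal family and then a genuine case analysis of which sets could have been moved. An analogous argument (first note that equality forces $\GG$ to be $(2,t)$-maximal, since adding any set of positive measure would violate the bound, and then trace a single shift backwards) is needed here and should be written out.
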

Theorem~\ref{thm1} tells us that if $p<\frac1{t+1}$ then $\GG^*$ is the unique 2-wise
$t$-intersecting family, up to isomorphism, with $p$-biased measure $p^t$.
This property is stable in the sense that if a 2-wise $t$-intersecting
family $\GG$ has the measure close to $\mu_p(\GG^*)=p^t$, then $\GG$
is `close to' $\GG^*$. Here, $\GG$ is close to $\GG^*$ means that
$\mu_p(\GG\triangle\GG^*)$ is small. More precisely, Friedgut proved
the following.
\begin{thm}[\cite{Friedgut}]\label{thm2}
Let $0<p<\frac1{t+1}$ and $\epsilon>0$. There is $C=C(p)$ such that
if $\GG\subset 2^{[n]}$ is a $2$-wise $t$-intersecting family
with $\mu_p(\GG)>p^t-\epsilon$, then $\mu_p(\GG\triangle\HH)\leq C\epsilon$
for some $\HH\cong\GG^*$.
\end{thm}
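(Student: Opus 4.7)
The plan is to reduce the stability claim to the uniqueness part of Theorem~\ref{thm1} via a junta approximation. The strategy has two pillars: (a) show that any 2-wise $t$-intersecting family $\GG$ with $\mu_p(\GG) > p^t - \epsilon$ is $O(\epsilon)$-close in $\mu_p$-measure to a junta $\mathcal{J}$ depending only on a bounded number $j=j(p,t)$ of coordinates; (b) classify those $t$-intersecting juntas of measure exceeding $p^t-\epsilon$ and show each must coincide with some star $\HH\cong\GG^*$.

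For (a), I would follow a Kindler--Safra/Friedgut-style Fourier expansion of the characteristic function $1_\GG$ in the $p$-biased basis. The condition $\mu_p(\GG) > p^t - \epsilon$ together with a Bourgain/KKL-type argument shows that the Fourier spectrum of $1_\GG$ concentrates on low levels, and the 2-wise $t$-intersecting constraint further forces this low-level mass to be supported on a bounded coordinate set $J$ with $|J|=j$; this produces the junta $\mathcal{J}$. A preliminary reduction to left-shifted families (which preserves $\mu_p$ and the 2-wise $t$-intersecting property) would be used to simplify the Fourier calculations.

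For (b), after restricting to $J$, the family $\mathcal{J}$ is essentially 2-wise $t$-intersecting on $J$ with $p$-biased measure within $O(\epsilon)$ of $p^t$. A finite case-analysis over juntas on $j$ coordinates, combined with Theorem~\ref{thm1} applied on $J$ (the hypothesis $p<\frac{1}{t+1}$ ensuring $\GG^*$ is the unique maximizer), forces $\mathcal{J}$ to be $O(\epsilon)$-close to a copy of $\{G\subset J : T\subset G\}$ for some $T\in\binom{J}{t}$. Lifting back to $[n]$ gives $\HH\cong\GG^*$ with $\mu_p(\GG\triangle\HH)\leq C(p)\epsilon$.

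The main obstacle is the quantitative control in step (a): the constant $C(p)$ must blow up as $p\to \frac{1}{t+1}^{-}$, because Frankl's competitor family $\FF_1:=\{F : |F\cap[t+2]|\geq t+1\}$ has $p$-measure approaching $p^t$ at that threshold, erasing the spectral gap that drives the junta approximation. Tracking this gap (essentially the quantity $p^t - \mu_p(\FF_1)$, which vanishes linearly in $\frac{1}{t+1}-p$) carefully through the Fourier step is the delicate ingredient that determines the precise dependence of $C(p)$ on $p$.
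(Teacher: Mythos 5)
This theorem is not proved in the paper at all: it is quoted verbatim from Friedgut \cite{Friedgut} as background, so the only meaningful comparison is with Friedgut's published argument. Your outline does track that argument in broad strokes (spectral analysis of $1_\GG$ in the $p$-biased basis, concentration of Fourier mass on low levels, a junta-type structure theorem, and a final classification step reducing to the uniqueness statement of Theorem~\ref{thm1}); your closing remark that $C$ must depend on $p$, because the gap $p^t-\mu_p(\{F:|F\cap[t+2]|\geq t+1\})$ vanishes as $p\to\frac1{t+1}$, is also correct and is exactly why the constant in the statement is $C(p)$.

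As a proof, however, the proposal has genuine gaps precisely at the two steps that carry all the weight. In step (a) you assert that ``a Bourgain/KKL-type argument'' yields low-level Fourier concentration; but KKL/Bourgain-type influence bounds are not the mechanism here, and nothing in your sketch explains why near-extremal measure forces spectral concentration. In Friedgut's proof this comes from a Hoffman-type eigenvalue bound for a carefully constructed ``$t$-intersection operator'' whose extremal eigenspaces live on levels $0,\dots,t$: the $t$-intersecting property makes a certain bilinear form vanish, the eigenvalue gap then shows that a family of measure at least $p^t-\epsilon$ has all but $O(\epsilon)$ of its Fourier weight on levels $\leq t$. Constructing that operator and verifying its spectrum is the heart of the proof and is absent from your plan. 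Likewise, your claim that ``the $2$-wise $t$-intersecting constraint forces the low-level mass to be supported on a bounded coordinate set $J$'' is not justified and is not how the argument goes: one needs a Kindler--Safra-type theorem (a Boolean function whose spectrum is concentrated up to level $t$ is close to a degree-$t$ junta) to obtain the junta, after which approximate $t$-intersection identifies it with a $t$-star. Finally, the preliminary shifting reduction you propose buys nothing in the Fourier framework (and is not needed); it is harmless but signals that the analytic core is being treated as a black box. Without supplying the operator/Hoffman step and the Kindler--Safra step, the sketch does not constitute a proof of Theorem~\ref{thm2}.
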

We note that a 2-wise $t$-intersecting family which is close to
$\GG^*$ is \emph{not necessarily} a subfamily of $\GG^*$.
For example, let $\GG':=(\GG^*\setminus\{[t]\})\cup\{[n]\setminus\{1\}\}$.
Then $\GG'$ is a 2-wise $t$-intersecting family with
$\mu_p(\GG'\triangle\GG^*)=p^tq^{n-t}+p^{n-1}q\to 0$ as $n\to\infty$ for 
fixed $p,t$. Thus $\GG'$ has the measure close to $\GG^*$,
but it is not a subfamily of $\GG^*$.

Ellis, Keller, and Lifshitz obtained a similar, much stronger stability 
result for all $\zeta\leq p<\frac12-\zeta$. We include the result for 
the case $p<\frac1{t+1}$ below. (See also \cite{EKL19} for the corresponding
result for the $k$-uniform families.)
\begin{thm}[\cite{EKL18}]\label{thm3}
For every $\zeta>0$ there is $C=C(\zeta)$ such that the following holds.
Let $\zeta\leq p<\frac1{t+1}$ and $\epsilon>0$. If $\GG\subset 2^{[n]}$
is a $2$-wise $t$-intersecting family with $\mu_p(\GG)>p^t-\epsilon$,
then $\mu_p(\GG\setminus\HH)\leq C\epsilon^{\log_{1-p}p}$ for some
$\HH\cong\GG^*$.
\end{thm}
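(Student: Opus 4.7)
The plan is to attack Theorem \ref{thm3} via the junta method: produce a small-junta approximation of $\GG$, identify that junta with a star, and then bootstrap the linear approximation into the sharp polynomial bound. The strategy rests on the observation that, for $p$ bounded away from $0$ and $1/(t+1)$, a 2-wise $t$-intersecting family of near-maximum measure must, up to a small error, depend only on a constant number of coordinates.

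First, I would invoke a junta approximation theorem for 2-wise $t$-intersecting families in the $p$-biased setting (such as those developed by Keller and Lifshitz). This yields a set $S\subset[n]$ whose size depends only on $\zeta$, together with a 2-wise $t$-intersecting junta $\FF$ depending only on the coordinates in $S$, such that $\mu_p(\GG\triangle\FF)\leq C_1\epsilon$. Because $\mu_p(\FF)\geq p^t-O(\epsilon)$ and $\FF$ is 2-wise $t$-intersecting, a junta version of Theorem \ref{thm1} pins down $\FF$: up to the $O(\epsilon)$ error, $\FF$ is the star $\{G\subset[n]:T\subset G\}$ for some $t$-subset $T\subset S$. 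Taking $\HH$ to be this star, we have $\mu_p(\GG\setminus\HH)\leq C_1\epsilon$ as a starting point, and what remains is to improve this bound by a polynomial factor.

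The heart of the argument is the bootstrap. For each $G\in\GG\setminus\HH$ let $d(G):=|T\setminus G|\geq 1$ measure its distance from the star, and stratify $\GG\setminus\HH$ into layers $\GG_k:=\{G\in\GG:d(G)=k\}$ for $k=1,2,\ldots,t$. The plan is to peel off these layers one by one: at each stage, pair a set $G\in\GG_k$ with sets in $\GG\cap\HH$ (there are plenty, since $\mu_p(\GG\cap\HH)\approx p^t$) and use the 2-wise $t$-intersecting property to force the existence of many compensating elements outside $T$; a careful double-counting argument should then show that each additional unit of distance costs a factor of order $p/(1-p)$ in the $p$-biased measure. Summing the resulting geometric series is what produces the exponent $\log_{1-p}p$ in the final bound.

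The main obstacle, as expected, is this bootstrap step. A naive shifting or compression argument only improves the constant, not the exponent, so the linear bound $O(\epsilon)$ cannot be upgraded to $O(\epsilon^{\log_{1-p}p})$ purely combinatorially. To genuinely obtain the polynomial saving, one almost certainly needs a hypercontractive estimate forcing the Fourier mass of the indicator of $\GG\setminus\HH$ to concentrate on low levels supported on $T$, combined with an iterated application of the linear stability bound to progressively thinned subfamilies. Tracking the ratio $p/(1-p)$, which governs how $p$-biased weight changes when a single coordinate is flipped, is precisely what produces the exponent appearing in the conclusion of Theorem \ref{thm3}.
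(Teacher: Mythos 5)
This statement is quoted in the paper from Ellis--Keller--Lifshitz \cite{EKL18}; the paper itself gives no proof, so you are in effect asked to reprove a substantial external theorem, and your proposal does not do so. The decisive issue is that the step which produces the exponent $\log_{1-p}p$ is exactly the step you leave open: you concede that the junta/linear-stability phase only yields $\mu_p(\GG\setminus\HH)=O(\epsilon)$, and that ``one almost certainly needs a hypercontractive estimate'' to upgrade it. That is the whole content of the theorem, so as it stands the proposal is a plan with an acknowledged hole at its core, not a proof.

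Moreover, the bootstrap you sketch is aimed at the wrong parameter. You stratify $\GG\setminus\HH$ by $d(G)=|T\setminus G|$ and hope each unit of distance costs a factor $p/(1-p)$, summing a geometric series in $d$. But the extremal configuration showing the exponent is sharp has \emph{all} of its non-star sets at distance $d=1$: fix $W\subset[n]\setminus T$ with $|W|=m$, remove from the star the sets disjoint from $W$ (cost about $p^tq^m$) and add sets with $|G\cap T|=t-1$ and $W\subset G$ (gain about $tqp^{t-1}p^m$); eliminating $m$ gives measure outside the star of order $\epsilon^{\log_{1-p}p}$. So the exponent comes from the trade-off between the measure of star-sets that must be \emph{missing} from $\GG$ (those disjoint from the forced ``compensating'' set $W$ outside $T$) and the $p$-biased cost $p^{|W|}$ of containing $W$ --- a cross-intersecting accounting between the restriction of $\GG\setminus\HH$ outside $T$ and the restriction of $\GG\cap\HH$ outside $T$ --- not from how many elements of $T$ are missed. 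A distance-stratified double count of the kind you describe cannot recover this, and invoking hypercontractivity by name does not supply the missing isoperimetric/cross-intersecting argument (in \cite{EKL18} this is done via biased isoperimetric stability and a genuine bootstrapping lemma, with care needed to make $C$ depend only on $\zeta$ and not on $t$). As written, the proposal therefore has a genuine gap.
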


This note addresses the same problem for the 3-wise $t$-intersecting families
and shows that the situation in this case is different from the 2-wise 
$t$-intersecting case. For this, we introduce
$r$-wise $t$-intersecting families $\FF_i^t(r)$ for 
$i=0,1,\ldots,\lfloor\frac{n-t}r\rfloor$ by
\[
 \FF_i^t(r):=\{F\subset[n]:|F\cap[t+ri]|\geq t+(r-1)i\}.
\]
Note that $\FF_0^t(r)=\GG^*$.
From now on we mainly consider 3-wise $t$-intersecting families,
and we write $\FF_i^t$ to mean $\FF_i^t(3)$.
By comparing $\mu_p(\FF_0^t)=p^t$ and $\mu_p(\FF_1^t)=p^{t+3}+(t+3)p^{t+2}q$
we see that $\mu_p(\FF_1^t)<\mu_p(\FF_0^t)$ if and only if $0<p<p_0$, where
\begin{align}\label{p0}
p_0=p_0(t):=\frac2{\sqrt{4t+9}-1}. 
\end{align}
Moreover, $\mu_p(\FF_1^t)=\mu_p(\FF_0^t)$ if and only if $p=p_0$.
In \cite{T2023EJC} the corresponding result to Theorem~\ref{thm1}
is obtained as follows.
\begin{thm}[\cite{T2023EJC}]
Let $t\geq 15$ and $0<p<p_0$.
Let $\GG\subset 2^{[n]}$ be a $3$-wise $t$-intersecting family.
Then $\mu_p(\GG)\leq p^t$. Moreover, equality holds if and only if 
$\GG\cong\FF_0^t$.
\end{thm}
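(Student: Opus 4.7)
My plan is the classical shift-and-classify strategy for multiply intersecting families. First I would replace $\GG$ by a shifted family with the same measure; then I would prove that every shifted 3-wise $t$-intersecting family has measure at most $\max_i \mu_p(\FF_i^t)$; and finally I would check that for $t\geq 15$ and $0<p<p_0$ this maximum equals $p^t$ and is achieved uniquely by $\FF_0^t$.

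For the reduction to the shifted case, I would apply the standard shifts $S_{ij}$ ($1\le i<j\le n$), which replace each $G\in\GG$ containing $j$ but not $i$ by $(G\setminus\{j\})\cup\{i\}$ when that set is not already present. Since $S_{ij}$ preserves each $|G|$ it preserves $\mu_p$, and a direct verification on triples of sets confirms that it preserves the 3-wise $t$-intersecting property. Iterating produces a shifted family $\widetilde\GG$ with $\mu_p(\widetilde\GG)=\mu_p(\GG)$.

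The technical heart is the structural bound $\mu_p(\widetilde\GG)\le\max_{0\le i\le(n-t)/3}\mu_p(\FF_i^t)$. I would proceed by induction on $n$ via the link/trace decomposition at the largest coordinate: writing $\widetilde\GG(n):=\{G\setminus\{n\}:n\in G\in\widetilde\GG\}$ and $\widetilde\GG(\bar n):=\{G\in\widetilde\GG:n\notin G\}$, shiftedness forces $\widetilde\GG(\bar n)\subset\widetilde\GG(n)$; the trace $\widetilde\GG(\bar n)$ is still 3-wise $t$-intersecting on $[n-1]$, while the link $\widetilde\GG(n)$ satisfies a weakened intersection condition read off from triples $(G_1,G_2,G_3)\in\widetilde\GG^3$ with $n\in G_1\cap G_2$. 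Combining the inductive bounds with $\mu_p(\widetilde\GG)=p\,\mu_p(\widetilde\GG(n))+q\,\mu_p(\widetilde\GG(\bar n))$ yields the structural bound. A direct polynomial calculation then gives
\[
\mu_p(\FF_i^t)=\sum_{j=t+2i}^{t+3i}\binom{t+3i}{j}p^{j}q^{t+3i-j},
\]
and the definition of $p_0$ in \eqref{p0} means $\mu_p(\FF_1^t)<p^t$ precisely for $0<p<p_0$, while for $i\ge 2$ the assumption $t\ge 15$ should give $\mu_p(\FF_i^t)<p^t$ across the full range. This establishes $\mu_p(\GG)\le p^t$. For the equality case, equality forces $\widetilde\GG\subset\FF_0^t$, hence $\widetilde\GG=\FF_0^t$, and an unshifting lemma (exploiting that $\FF_0^t$ is itself shifted and uniquely determined by its measure within shifted families) gives $\GG\cong\FF_0^t$.

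The dominant difficulty is the structural bound: unlike the 2-wise case, where shifted families are controlled by classical Frankl-type theorems, the 3-wise setting does not cleanly reduce to a pairwise condition. Capturing the precise weakened intersection property that the link $\widetilde\GG(n)$ inherits, and feeding it back into a bound of the same shape, is where the work will concentrate; the hypothesis $t\ge 15$ will most likely enter through the base of the induction or through the delicate comparison among the $\mu_p(\FF_i^t)$ for $i\ge 2$ near $p=p_0$.
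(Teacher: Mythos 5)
This statement is quoted from \cite{T2023EJC}; the present paper gives no proof of it, so the only fair comparison is with the machinery the paper actually develops (Section~\ref{sec:proof}, borrowed from \cite{T2007b,Frankl1991}) and with the cited source, and against those your outline has a genuine gap at its core. Your ``technical heart'' --- that every shifted $3$-wise $t$-intersecting family satisfies $\mu_p(\widetilde\GG)\le\max_i\mu_p(\FF_i^t)$ --- is not a lemma one can expect to establish by a short link/trace induction: it is the $p$-biased analogue of Frankl's conjecture for multiply intersecting families, known for $r=2$ via Ahlswede--Khachatrian but open for $r\ge 3$ in general. The actual proof in \cite{T2023EJC} (and the related arguments in this paper) does not prove that structural bound at all; it works only in the restricted range $0<p<p_0$, $t\ge 15$, by decomposing a shifted family according to the ``holes'' in an initial segment (the families $\TT_i$ of Section~\ref{sec:proof}), showing each piece inherits a $2$-wise $(2i+1)$-intersecting property, and bounding the pieces with $2$-wise measure bounds such as Lemma~\ref{EKR} (random walk method). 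Your sketch never identifies the ``weakened intersection condition'' the link $\widetilde\GG(n)$ inherits, nor how it feeds back into a bound of the same shape; also, the inclusion $\widetilde\GG(\bar n)\subset\widetilde\GG(n)$ you attribute to shiftedness actually comes from upward closure (inclusion maximality), not from the shifting operation. As written, the dominant difficulty you correctly identify is simply left unresolved, and the route you propose would, if completed, prove a statement stronger than anything currently known.

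A second, smaller gap is the equality case. Passing from ``the shifted family equals $\FF_0^t$'' back to ``$\GG\cong\FF_0^t$'' is not automatic and is not supplied by the remark that $\FF_0^t$ is shifted and uniquely optimal among shifted families: one must analyze how a single shift $\sigma_{i,j}$ can map a family into $\FF_0^t$, which is precisely the content of Lemma~\ref{lemma:F0} in this paper (stated for $(3,t)$-maximal families; you would first reduce to that case by noting that adding any set strictly increases $\mu_p$). So the unshifting step needs an argument of that kind, not just an appeal to uniqueness in the shifted world.
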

In \cite{T2023EJC} it is conjectured that the same result holds for all 
$t\geq 2$ (the case $t=1$ is different).
Also, in \cite{T2023EJC}, a stability result for 3-wise intersecting 
families is obtained, which is similar to Theorem~\ref{thm2}, 
but only valid for shifted families. (See subsection~\ref{shifting} for the
definition of shifted families.)
In fact, however, a much stronger stability was expected.
\begin{conj}[\cite{T2023EJC}]\label{conj1}
Let $t\geq 2$ and $0<p<p_0$.
Then there exists $\epsilon=\epsilon(p,t)>0$ satisfying the following 
statement: if $\GG\subset 2^{[n]}$ is a $3$-wise $t$-intersecting family with
$\mu_p(\GG)>p^t-\epsilon$, then $\GG\subset\FF$ for some $\FF\cong\FF_0^t$.
\end{conj}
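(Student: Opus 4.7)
The plan is to combine Friedgut's $2$-wise stability result (Theorem~\ref{thm2}) with a bootstrap argument that exploits the $3$-wise condition to promote Hamming-type closeness to set-theoretic containment. The key observation is that if $\GG$ contains a set $G$ not contained in the ``expected'' star $\HH\cong\FF_0^t$, then every pair of sets in $\GG\cap\HH$ is forced to intersect $G\setminus[t]$ in many elements, which is a very restrictive $2$-wise intersection condition on a smaller ground set and therefore pins the measure of $\GG\cap\HH$ far below $p^t$.

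First I would note that every $3$-wise $t$-intersecting family is automatically $2$-wise $t$-intersecting. By Theorem~\ref{thm2} applied to $\GG$ there is a permutation $\tau$ such that $\HH:=\{F:\tau([t])\subset F\}\cong\FF_0^t$ satisfies $\mu_p(\GG\triangle\HH)\leq C\epsilon$ for $C=C(p)$. After relabelling I may assume $\HH=\{F:[t]\subset F\}$. Write $\GG_0:=\GG\cap\HH$ and $\GG_1:=\GG\setminus\HH$, so $\mu_p(\GG_1)\leq C\epsilon$ and $\mu_p(\GG)=\mu_p(\GG_0)+\mu_p(\GG_1)$. The goal is to show $\GG_1=\emptyset$ once $\epsilon$ is small.

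Suppose toward contradiction that some $G\in\GG_1$ exists; set $a:=|G\cap[t]|\leq t-1$ and $G':=G\setminus[t]$. For any $F_1,F_2\in\GG_0$ the triple $(F_1,F_2,G)$ lies in $\GG$, so $|F_1\cap F_2\cap G|\geq t$. Since $F_1\cap F_2\supset[t]$ and $|[t]\cap G|=a$, this forces $|F_1\cap F_2\cap G'|\geq t-a$. Hence the projected family $\bar\GG:=\{(F\setminus[t])\cap G':F\in\GG_0\}\subset 2^{G'}$ is $2$-wise $(t-a)$-intersecting on $G'$. Applying Theorem~\ref{thm1} (in the appropriate parameter range) yields $\mu_p(\bar\GG:G')\leq p^{t-a}\leq p$, and a product-measure computation over $[t]\sqcup G'\sqcup([n]\setminus([t]\cup G'))$ then gives
\[
\mu_p(\GG_0)\leq p^t\cdot\mu_p(\bar\GG:G')\leq p^{t+1}.
\]
Therefore $\mu_p(\GG)\leq p^{t+1}+C\epsilon$, which contradicts $\mu_p(\GG)>p^t-\epsilon$ as soon as $\epsilon<\frac{p^t(1-p)}{C+1}$. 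This forces $\GG_1=\emptyset$, i.e.\ $\GG\subset\HH$, as required.

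The main obstacle is the parameter range for Theorem~\ref{thm1}. The projection bound $\mu_p(\bar\GG)\leq p^{t-a}$ requires $p<1/(t-a+1)$, and the tight case $a=0$ demands $p<1/(t+1)$, whereas the conjecture allows $p$ up to $p_0=2/(\sqrt{4t+9}-1)$. For $t\geq 5$ one has $p_0<1/2$, and the coarser bound $\mu_p(\bar\GG)\leq p$ valid for any $2$-wise intersecting family with $p<1/2$ already closes the argument. For $t\in\{2,3,4\}$, where $p_0>1/2$, one must instead invoke either the full $p$-biased Ahlswede--Khachatrian classification applied to $\bar\GG$ or the sharper Ellis--Keller--Lifshitz stability (Theorem~\ref{thm3}); the latter is attractive because it gives polynomial-in-$\epsilon$ control on $\mu_p(\GG\setminus\HH)$ and lets one absorb the error from pairs $(F_1,F_2)$ near the boundary of $\GG_0$. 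A secondary delicate point is that $\tau$ from Friedgut's theorem need not be unique when $\GG$ is nearly symmetric; one has to verify that the projection argument is compatible with any valid choice of $\HH$.
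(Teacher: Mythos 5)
The statement you are proving is a conjecture which the paper itself does not establish in full; Theorem~\ref{thm5} verifies it only for $t$ larger than some $t_0(\delta)$, via shifting, the $(3,t)$-maximality Lemmas~\ref{lemma:F0}--\ref{lemma:matome}, and the decomposition \eqref{eq:sum of G(Ti)} into the families $\TT_i$. A short argument that settles all $t\geq 2$ should therefore be examined closely, and indeed your first step contains a genuine gap. You invoke Friedgut's stability theorem (Theorem~\ref{thm2}) for the $2$-wise $t$-intersecting family $\GG$, but that theorem is only valid for $0<p<\frac1{t+1}$, whereas the conjecture allows the whole range $0<p<p_0(t)=\frac{2}{\sqrt{4t+9}-1}$, and $p_0(t)\approx t^{-1/2}$ is far larger than $\frac1{t+1}$ for every $t\geq 2$. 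In the regime $\frac1{t+1}\leq p<p_0$ no $2$-wise analogue of Theorem~\ref{thm2} is available, and none can be: the family $\FF_1^t(2)=\{F:|F\cap[t+2]|\geq t+1\}$ is $2$-wise $t$-intersecting with $\mu_p\geq p^t$ for $p\geq\frac1{t+1}$, yet it is at distance bounded away from $0$ from every star. So the very first step of your bootstrap — producing a star $\HH$ with $\mu_p(\GG\setminus\HH)\leq C\epsilon$ from the $2$-wise condition alone — is unavailable precisely in the range of $p$ that makes the $3$-wise conjecture interesting; ruling out such non-star configurations using the $3$-wise hypothesis is the actual content of the problem, and it is what the paper's machinery (shifting plus the $2$-wise $(2i+1)$-intersecting structure of the $\TT_i$ in Lemma~\ref{lemmaMIFR}) is built to do. You flag the parameter-range issue for Theorem~\ref{thm1} in the projection step, but not for Theorem~\ref{thm2}, where it is fatal; your suggested fallback to Theorem~\ref{thm3} does not help either, since as stated it is again confined to $p<\frac1{t+1}$ (and the general Ellis--Keller--Lifshitz result gives stability around the $2$-wise optimum at that $p$, which for $p>\frac1{t+1}$ is no longer the star).

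The rest of your argument is sound and worth keeping in mind: granting a star $\HH$ with $\mu_p(\GG\setminus\HH)\leq C\epsilon$, the observation that any $G\in\GG\setminus\HH$ forces the traces $\{F\cap(G\setminus[t]):F\in\GG\cap\HH\}$ to be $2$-wise $(t-a)$-intersecting, hence (for $p<\frac12$) of measure at most $p$, does give $\mu_p(\GG\cap\HH)\leq p^{t+1}$ and the desired contradiction for $\epsilon<p^t(1-p)/(C+1)$. So your proposal does prove the conjecture in the restricted range $0<p<\frac1{t+1}$, by a route quite different from the paper's. But it does not extend to $\frac1{t+1}\leq p<p_0$, and hence does not prove the stated conjecture.
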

This conjecture turned out to be false if $t=2,3$. To see this, define
a 3-wise $t$-intersecting family $\RR$ by
\[
\RR:=\{[t]\cup A:A\subset[t+1,n],\,|A|\geq\tfrac n2\} 
\cup\{B\cup[t+1,n]:B\subset 2^{[t]}\},
\]
where $[i,j]:=[j]\setminus[i-1]$ for positive integers $i,j$ with $i<j$.
If $t=2$ or $3$, then $\mu_p(\RR)\to p^t$ as $n\to\infty$ provided 
$\frac12<p\leq p_0(t)$, and so the conjecture fails.
But if $t\geq 4$ then $p_0(t)\leq\frac12$ and $\mu_p(\RR)\ll p^t$.
If Conjecture~\ref{conj1} is still true for $t\geq 4$, then there is a big difference 
between the 2-wise $t$-intersecting case and the $3$-wise $t$-intersecting case.
That is, in the latter case a family with the measure close to $p^t$ is 
\emph{necessarily} a subfamily of the optimal family $\FF_0^t=\GG^*$.

In this note we verify Conjecture~\ref{conj1} when $t$ is large.
To motivate our result we compare the measure of $\FF_0^t$ and $\FF_2^t$.
In subsection~\ref{subsec:1/2}, we will show 
\begin{align}
\max_{0<p\leq p_0}\frac{\mu_p(\FF_2^t)}{\mu_p(\FF_0^t)}
\to\frac12 \label{eq:1/2}
\end{align}
as $t\to\infty$, via direct computation.
Now we can state our main result.

\begin{thm}\label{thm5}
For every $\delta>0$ there exists $t_0$ such that 
for all $t$ and $p$ with $t>t_0$ and $0<p\leq p_0(t)$, the following holds. 
If $\GG\subset 2^{[n]}$ is a $3$-wise $t$-intersecting family with
$\mu_p(\GG)\geq(\frac12+\delta) p^t$, then $\GG\subset\FF$ for some $\FF$ 
such that $\FF\cong\FF_0^t$ or $\FF\cong\FF_1^t$.
\end{thm}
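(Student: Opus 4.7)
The plan is to use shifting together with the stability result for shifted $3$-wise $t$-intersecting families from \cite{T2023EJC}, and then to leverage the $3$-wise intersecting condition to upgrade ``close to'' into ``contained in''. Equation~\eqref{eq:1/2} makes the threshold $(\tfrac12+\delta)p^t$ natural: for $t$ large enough, among the candidates $\FF_i^t$ only $\FF_0^t$ and $\FF_1^t$ satisfy $\max_{0<p\le p_0}\mu_p(\FF_i^t)\ge(\tfrac12+\delta)p^t$, so any approximating structure for $\GG$ must lie in $\{\FF_0^t,\FF_1^t\}$.

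I would begin by shifting $\GG$. Since the shift operation preserves $\mu_p$ and the $3$-wise $t$-intersecting property, we may work with the shifted family. Applying the shifted stability of \cite{T2023EJC} (possibly after strengthening, since our measure hypothesis $\mu_p(\GG)\ge(\tfrac12+\delta)p^t$ is weaker than closeness to the optimum $p^t$) yields an index $i$ with $\mu_p(\GG\triangle\FF_i^t)$ controlled; combined with \eqref{eq:1/2} this forces $i\in\{0,1\}$.

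The core step is to show $\GG$ is in fact \emph{contained} in the surviving $\FF_i^t$. Suppose for contradiction $V\in\GG$ lies outside. In the $i=0$ case, $V\not\supset[t]$, say $1\notin V$. Shiftedness together with substantial measure of $\GG$ should guarantee two ``thin'' witnesses $A=[t]\cup\{a\}\in\GG$ and $B=[t]\cup\{b\}\in\GG$ with $a\ne b$ and $a,b>t$, so that $A\cap B=[t]$. Applying the $3$-wise $t$-intersecting condition to $V,A,B$ then gives $|V\cap[t]|\ge t$, contradicting $1\notin V$. The $i=1$ case is handled analogously by producing $A,B\in\GG$ with $(A\cap B)\cap[t+3]$ a prescribed $(t+2)$-subset and $(A\cap B)\setminus[t+3]=\emptyset$, so that any $V$ with $|V\cap[t+3]|\le t+1$ violates the $3$-wise condition.

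The main obstacles are twofold. First, producing the thin witnesses $A,B$ from the measure hypothesis alone requires either a quantitative strengthening of the stability in \cite{T2023EJC} or a direct combinatorial argument showing that sets of the form $[t]\cup\{x\}$ survive in $\GG$ for sufficiently many $x>t$; without this, the $3$-wise contradiction cannot be triggered. Second, the conclusion ``$\GG\subset\FF$ for some $\FF\cong\FF_i^t$'' is not automatically transferred back from the shifted family to the original $\GG$, since containment in a specific structure is generally not preserved under inverse shifts. I would handle this by running the thin-pair argument directly on the unshifted $\GG$, choosing the core $T$ to be the $t$-subset (or $(t+3)$-subset) that maximizes $\mu_p(\{G\in\GG:G\supset T\})$; the shifted analysis then supplies the structural lemma ensuring such a $T$ exists and dominates the measure, letting the contradiction step run unchanged with $T$ in place of $[t]$.
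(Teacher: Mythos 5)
Your proposal has two genuine gaps, and they sit exactly at the two places where the paper does its real work. First, the stability result for shifted families in \cite{T2023EJC} only applies when the measure is close to the optimum $p^t$ (as in Theorem~\ref{thm2}); under the hypothesis $\mu_p(\GG)\ge(\tfrac12+\delta)p^t$ it gives nothing, and the ``possible strengthening'' you invoke is precisely the content of the theorem you are trying to prove. The paper does not use any black-box stability statement: it proves directly that every \emph{shifted} $3$-wise $t$-intersecting family $\HH$ with $\HH\not\subset\FF_0^t$, $\HH\not\subset\FF_1^t$, $\HH\not\subset\FF_2^t$ satisfies $\mu_p(\HH)<\tfrac12 p^t$. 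This is done by introducing the parameter $h=\min\{i:|H\cap[t+i]|\ge t\text{ for all }H\in\HH\}$, decomposing $\HH$ according to the rightmost hole into families $\TT_i\subset 2^{[t+h,n]}$ which are $2$-wise $(2i+1)$-intersecting (and $\TT_h$ even $2$-wise $(2h+2)$-intersecting when $\HH\not\subset\FF_h^t$, Lemma~\ref{lemmaMIFR}), and then bounding each $\mu_p(\TT_i:[t+h,n])$ via Lemma~\ref{EKR}, with separate estimates for $h=1,2,3$, for $4\le h\le\tfrac{\sqrt t}2-\tfrac54$, and for $h\ge\tfrac{\sqrt t}2-\tfrac54$. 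Without an argument of this kind your plan has no engine: the case analysis over $i$ ``forced into $\{0,1\}$ by \eqref{eq:1/2}'' never gets off the ground.

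Second, your ``thin witness'' step fails as stated: the sets $[t]\cup\{a\}$ and $[t]\cup\{b\}$ need not belong to $\GG$ at all, since a $3$-wise $t$-intersecting family of large measure can consist entirely of large sets; you acknowledge this obstacle but do not close it, and likewise your fix for transferring the containment back through the shifts (choosing the $T$ maximizing the measure of $\{G\in\GG:G\supset T\}$) does not force the restricted family to contain the required small sets. The paper's mechanism is different: it first enlarges $\GG$ to a $(3,t)$-maximal family $\tilde\GG$, and then proves (Lemmas~\ref{lemma:F0} and~\ref{lemma:F1}) that if a \emph{single} shift $\sigma_{i,j}$ of a $(3,t)$-maximal family lands inside $\FF_0^t$ or $\FF_1^t$, then the family itself is already isomorphic to $\FF_0^t$ or $\FF_1^t$; iterating along the shifting process (Lemma~\ref{lemma:matome}) transfers the structure back to $\tilde\GG\supset\GG$. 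It is the $(3,t)$-maximality that supplies the auxiliary sets (such as $[n]\setminus\{i\}$ or suitable $t$-element sets) playing the role of your missing witnesses. So both the measure bound below $\tfrac12 p^t$ and the untangling of the shifting need to be proved, not assumed, and neither is supplied by your outline.
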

This result roughly tells us that if a 3-wise $t$-intersecting family 
has measure larger than $\frac12p^t$ and also larger than 
$\mu_p(\FF_1^t)$, then it is necessarily a subfamily of $\FF_0^t$.
Thus the stability of 3-wise $t$-intersecting families is stronger than
that of 2-wise $t$-intersecting families, cf.~Theorems~\ref{thm2} 
and \ref{thm3}.

The condition $\mu_p(\GG)\geq(\frac12+\delta)p^t$ in Theorem~\ref{thm5}
is almost tight, because if $p$ is close to $p_0$ then $\mu_p(\FF_2^t)$ is 
close to $\frac12p^t$, see subsection~\ref{subsec:1/2}. 
We cannot replace the condition with 
$\mu_p(\GG)>\mu_p(\FF_1^t)$ to imply that $\GG$ is a subfamily of 
$\FF_0^t$, because $\FF_1^t$ is not necessarily the 
second largest choice if $p$ is small. 
To see this, consider
the family
\begin{align}\label{eq:Q}
\QQ:=\big\{[t+1]\cup A:\emptyset\neq A\subset[t+2,n]\big\}\cup
\big\{[n]\setminus\{i\}:i\in[n]\big\}.
\end{align}
This is a 3-wise $t$-intersecting family which
is not a subfamily of $\FF_0^t$, $\FF_1^t$, or $\FF_2^t$.
But the measure of $\QQ$ is approximately $p^{t+1}$ if $n$ is sufficiently 
large, and $p^{t+1}$ is larger than $\mu_p(\FF_1^t)$ for $p<\frac1{t+2}$.

We use a shifting operation to prove Theorem~\ref{thm5}. 
Usually such results are valid only for shifted families. 
One novelty of our result is that it avoids the condition that the families 
are shifted. This can be accomplished by careful analysis of the shifting 
process, see Lemma~\ref{lemma:matome}. Using this lemma we can focus on
shifted 3-wise $t$-intersecting families not included $\FF_0^t$, $\FF_1^t$,
or $\FF_2^t$. Then, in section~\ref{sec:proof}, we divide such families
into subfamilies with additional intersecting properties 
(Lemma~\ref{lemmaMIFR}), and bound the measure of each subfamily
to conclude Theorem~\ref{thm5}.

\section{Preliminaries}
\subsection{Comparing the measure of $\FF_0^t$ and $\FF_2^t$}\label{subsec:1/2}
We prove \eqref{eq:1/2}. Let $q_0=1-p_0$. 
Since $p^2 q$ is increasing in $p$ for $0<p\leq p_0(t)$, we have
$p^4 q^2=(p^2 q)^2\leq (p_0^2q_0)^2$, 
$p^5 q=(p^2 q)p^3\leq (p_0^2 q_0)p_0^3$.
Thus we have
\begin{align*}
\max_{0<p\leq p_0}\frac{\mu_p(\FF_2^t)}{\mu_p(\FF_0^t)}
&=\max_{0<p\leq p_0}\left(
\binom{t+6}2p^4q^2+\binom{t+6}1p^5q+\binom{t+6}0p^6\right)\\
&=\binom{t+6}2p_0^4q_0^2+\binom{t+6}1p_0^5q_0+\binom{t+6}0p_0^6\\
&=\frac{16(2 t^3-3 t^2\sqrt{4 t+9}+31t^2-31 t\sqrt{4 t+9}+153t-78 \sqrt{4 t+9}+238)}{\left(\sqrt{4 t+9}-1\right)^6},
\end{align*}
and the RHS goes to $1/2$ as $t\to\infty$.

\subsection{Shifting and related results}\label{shifting}
For $1\leq i<j\leq n$ we define the shifting operation 
$\sigma_{i,j}:2^{[n]}\to 2^{[n]}$ by 
\[
\sigma_{i,j}(\GG):=\{s_{i,j}(G):G\in\GG\}, 
\]
where
\[
s_{i,j}(G):=\begin{cases}
(G\setminus\{j\})\cup\{i\}&\text{if }G\cap\{i,j\}=\{j\}
\text{ and }(G\setminus\{j\})\cup\{i\}\not\in\GG,
\\
G & \text{otherwise}.	   
\end{cases}
\]
By definition $\mu_p(\GG)=\mu_p(\sigma_{i,j}(\GG))$.
We say that $\GG$ is \emph{shifted} if $\GG$ is invariant under any shifting 
operation, in other words, if $G\in\GG$ then $s_{i,j}(G)\in\GG$ for all 
$1\leq i<j\leq n$. If $\GG$ is not shifted then 
$\sum_{G\in\GG}\sum_{g\in G}g>\sum_{G'\in\sigma_{i,j}(\GG)}\sum_{g'\in G'}g'$
for some $i,j$ with $i<j$, and so starting from $\GG$ we get a shifted 
$\HH$ by applying shifting operations a finite number of times.
We say that this $\HH$ is \emph{obtained from $\GG$ by shifting}.
From the definition of the shifting operation, the following simple fact 
follows.
\begin{lemma}
Let $1\leq i<j\leq n$, and 
let $\GG\subset 2^{[n]}$ be an $r$-wise $t$-intersecting family.
Then the family $\sigma_{i,j}(\GG)$ is also $r$-wise $t$-intersecting.
In particular, there is a shifted $r$-wise $t$-intersecting family $\HH$
obtained from $\GG$ by shifting such that $\mu_p(\HH)=\mu_p(\GG)$.
\end{lemma}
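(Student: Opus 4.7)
The lemma has two parts: first, that a single shift $\sigma_{i,j}$ preserves the $r$-wise $t$-intersecting property; second, that iteration yields a shifted family $\HH$ with the same measure. The second part is routine once the first is in hand: each shift merely swaps $j$ for $i$ inside individual sets, so it preserves $|G|$ and hence the contribution of every set to $\mu_p$, while the integer $\sum_{G\in\GG}\sum_{g\in G}g$ strictly decreases under any nontrivial shift, which forces the iteration to terminate at a shifted family. So I focus on the single-shift statement.

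Fix $G_1',\ldots,G_r'\in\sigma_{i,j}(\GG)$. Since $s_{i,j}\colon\GG\to\sigma_{i,j}(\GG)$ is a bijection (the pair $G,(G\setminus\{j\})\cup\{i\}$ is handled together), each $G_k'$ equals $s_{i,j}(G_k)$ for a unique $G_k\in\GG$. Let $I$ be the indices where $G_k$ was actually moved, and $J=\{1,\ldots,r\}\setminus I$. For $k\in I$ we have $j\in G_k\setminus G_k'$, $i\in G_k'\setminus G_k$, and crucially $G_k'=(G_k\setminus\{j\})\cup\{i\}\notin\GG$. Put $A=\bigcap_{k=1}^r G_k'$ and $B=\bigcap_{k=1}^r G_k$; these agree outside $\{i,j\}$, and $|B|\geq t$ by hypothesis.

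If $I=\emptyset$, then $A=B$ and we are done. Otherwise $j\notin A$ automatically, and $i\notin B$ (since some $G_k$ with $k\in I$ avoids $i$). If additionally $j\notin B$, then $B=B\setminus\{i,j\}$ and $|A|\geq|A\setminus\{i,j\}|=|B|\geq t$. The delicate case, which I expect to be the main obstacle, is $j\in B$ with $I\neq\emptyset$: here $A$ has lost the element $j$ that $B$ enjoyed, so I must recover an extra unit of slack in $|B|$ to compensate. I plan to do this with a swap trick grounded in the non-membership clause of the definition of $s_{i,j}$. For any $k\in J$ with $j\in G_k$ and $i\notin G_k$, the fact that $s_{i,j}$ fixed $G_k$ forces $G_k^{*}:=(G_k\setminus\{j\})\cup\{i\}$ to already lie in $\GG$; call these indices $K$, and set $L=J\setminus K$. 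Each $k\in L$ either has $i\in G_k$ or has $j\notin G_k$. If $K=\emptyset$, then since $j\in B$ forces $j\in G_k$ for every $k\in L$, every $k\in L$ must have $i\in G_k$; combined with the automatic $i\in G_k'$ for $k\in I$, this gives $i\in A$ and $|A|=|B\setminus\{i,j\}|+1=|B|\geq t$. If $K\neq\emptyset$, I apply the $r$-wise $t$-intersecting hypothesis of $\GG$ to the new $r$-tuple $H_k=G_k^{*}$ for $k\in K$ and $H_k=G_k$ for $k\in I\cup L$; all $H_k$ lie in $\GG$, and a direct element check shows $\bigcap_k H_k=B\setminus\{i,j\}$, hence $|B|-1\geq t$, hence $|A|\geq|B\setminus\{i,j\}|=|B|-1\geq t$.

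The only subtle point is the swap trick in this last sub-case: without the clause $(G\setminus\{j\})\cup\{i\}\notin\GG$ in the definition of the shift, which is exactly what guarantees $G_k^{*}\in\GG$ for $k\in K$, there would be no way to recover $|B|\geq t+1$ and the argument would collapse. Everything else is bookkeeping, and the measure-preservation and termination claims needed to extract the shifted $\HH$ are immediate.
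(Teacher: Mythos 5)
Your argument is correct and complete: the case analysis (moved indices $I$, the split of the fixed indices into $K$ and $L$, and the swap trick using the clause $(G\setminus\{j\})\cup\{i\}\notin\GG$ to produce the $r$-tuple $H_k\in\GG$ with $\bigcap_k H_k=B\setminus\{i,j\}$) is exactly the standard proof of this folklore fact, which the paper itself omits, stating only that it ``follows from the definition''; the measure-preservation and termination remarks match the paper's preceding discussion. Nothing to correct.
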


We say that a family $\GG\subset 2^{[n]}$ is \emph{size maximal} 
with respect to the $r$-wise $t$-intersecting property, 
or \emph{$(r,t)$-maximal} for short, 
if $\GG$ is $r$-wise $t$-intersecting, but $\GG\cup\{F\}$ is not $r$-wise 
$t$-intersecting for every $F\in 2^{[n]}\setminus\GG$.
Note that if $\GG$ is size maximal, then it is inclusion maximal, that is,
if $G\in\GG$ and $G\subset H$ then $H\in\GG$.

\begin{lemma}\label{lemma:F0}
Let $\GG\subset 2^{[n]}$ be $(3,t)$-maximal,
and let $\HH=\sigma_{i,j}(\GG)$.
If $\HH\subset\FF_0^t$ then $\GG\subset\FF$ for some $\FF$ isomorphic to 
$\FF_0^t$, and thus $\GG\cong\FF_0^t$ and $\HH=\FF_0^t$.
\end{lemma}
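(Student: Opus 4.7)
The plan is to analyze the shift $s_{i,j}$ set-by-set using the hypothesis $\HH \subset \FF_0^t$. Partition $\GG = \GG_0 \sqcup \GG_1$, where $\GG_0 := \{G \in \GG : s_{i,j}(G) = G\}$ and $\GG_1 := \GG \setminus \GG_0$. Every $G \in \GG_0$ lies in $\HH \subset \FF_0^t$, so $[t] \subset G$; every $G \in \GG_1$ has $j \in G$, $i \notin G$, and $(G \setminus \{j\}) \cup \{i\} \in \HH \setminus \GG$, so $[t] \subset (G \setminus \{j\}) \cup \{i\}$. Throughout, $\GG$ is upward closed because it is $(3,t)$-maximal.

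A quick case distinction on the positions of $\{i,j\}$ relative to $[t]$ shows that only the case $i \in [t]$, $j \notin [t]$ (hence $i < j$) can admit a set $G \in \GG_1$ that does not contain $[t]$. In the other three cases ($\{i,j\} \subset [t]$; $\{i,j\} \cap [t] = \emptyset$; or $j \in [t]$ with $i \notin [t]$), the constraint on $G \in \GG_1$ immediately forces $[t] \subset G$, so $\GG \subset \FF_0^t$ and size-maximality upgrades this to $\GG = \FF_0^t$. In the remaining case, set $T'' := ([t] \setminus \{i\}) \cup \{j\}$: then $\GG_0 \subset \{F : [t] \subset F\}$ and $\GG_1 \subset \{F : T'' \subset F,\ i \notin F\}$. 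If $\GG_1 = \emptyset$ then again $\GG = \FF_0^t$. Otherwise, the plan is to show that every $G_0 \in \GG_0$ contains $j$ (so that $G_0 \supset T''$), giving $\GG \subset \{F : T'' \subset F\}$, a star isomorphic to $\FF_0^t$.

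The key step is the following contradiction argument. Suppose some $G_0 \in \GG_0$ satisfies $j \notin G_0$, and pick any $G_1 \in \GG_1$. Set $S_0 := G_0 \setminus [t]$ and $F := G_1 \cup S_0$. Then $F \in \GG$ by upward closure (as $F \supset G_1$), $j \in F$, and $i \notin F$ because $i \notin G_1$ (by the definition of $\GG_1$) and $i \notin S_0$ (since $i \in [t]$ while $S_0 \subset [n] \setminus [t]$). Moreover, $(F \setminus \{j\}) \cup \{i\} \supset [t] \cup S_0 = G_0 \in \GG$, so $(F \setminus \{j\}) \cup \{i\} \in \GG$ by upward closure; this forces $s_{i,j}(F) = F$, and hence $F \in \GG_0 \subset \HH \subset \FF_0^t$. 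But then $i \in [t] \subset F$, contradicting $i \notin F$.

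This yields $\GG \subset \FF$ for some $\FF \cong \FF_0^t$ in all cases. Size-maximality of both $\GG$ and $\FF$ upgrades the inclusion to $\GG = \FF \cong \FF_0^t$, and $|\HH| = |\GG| = |\FF_0^t|$ combined with $\HH \subset \FF_0^t$ then gives $\HH = \FF_0^t$. The hardest part will be spotting the auxiliary set $F = G_1 \cup S_0$ in the main step; everything else is upward closure and bookkeeping.
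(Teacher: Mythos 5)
Your proof is correct, and it takes a genuinely different route from the paper's. The paper also reduces to the case $i\in[t]$, $j\notin[t]$, but then it splits on whether $\GG$ contains a $t$-element set: if it does, $(3,t)$-maximality immediately gives $\GG\cong\FF_0^t$; if not, it invokes the non-extendability part of maximality twice (first to find $G,G'\in\GG$ witnessing that $[t]$ cannot be added, which after normalization yields $G=[n]\setminus\{i\}$ and forces $j\in G$ for all $G\in\GG$, and then to get a contradiction by adjoining $T=[t-1]\cup\{j\}$), so the $3$-wise $t$-intersecting property and size-maximality are used throughout. You instead partition $\GG$ into the sets fixed by $s_{i,j}$ and the sets that move, identify the candidate star over $T''=([t]\setminus\{i\})\cup\{j\}$, and prove the containment $\GG\subset\{F:T''\subset F\}$ using only upward closure, via the auxiliary set $F=G_1\cup S_0$ whose shift is blocked by $(F\setminus\{j\})\cup\{i\}\supset G_0$; the intersecting property and non-extendability enter only in the final upgrade from $\GG\subset\FF$ to $\GG=\FF$ and $\HH=\FF_0^t$. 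Your argument is thus slightly more general (the containment-in-a-star step works for any upward-closed family with $\sigma_{i,j}(\GG)\subset\FF_0^t$) and it exhibits the star explicitly, while the paper's argument is a more direct exploitation of $(3,t)$-maximality. Two cosmetic points: in the subcases $j\in[t]$ (with $i\in[t]$ or $i\notin[t]$) the constraint on $\GG_1$ is actually contradictory, so $\GG_1=\emptyset$ rather than ``$[t]\subset G$'' holding nontrivially; and you should note (one line) that $|\HH|=|\GG|$ because $s_{i,j}$ is injective on $\GG$, which you use at the end.
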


\begin{proof}
If $i,j\in[t]$ or $i,j\in[n]\setminus[t]$, then the result easily follows
since $[t]\subset G$ if and only if $[t]\subset s_{i,j}(G)$.
So we may assume that $i\in[t]$ and $j\in[n]\setminus[t]$, say,
$i=t$, $j=t+1$.
Since $\HH\subset\FF_0^t$, every $G\in\GG$
satisfies that $[t]\subset s_{i,j}(G)$. Thus we have $[t-1]\subset G$
and $|G\cap\{i,j\}|\geq 1$. 

If $\GG$ contains a $t$-element set, then 
$\GG\cong\FF_0^t$ because $\GG$ is $(3,t)$-maximal.
So we assume that $\GG$ contains no $t$-element set, in particular,
$[t]\not\in\GG$. Then it follows from $(3,t)$-maximality that 
$\GG\cup\{[t]\}$ is not 3-wise $t$-intersecting. 
Thus there are $A,B\in\GG$ such that $|[t]\cap A\cap B|<t$, and
one of them, say, $A$ satisfies $A\cap\{i,j\}=\{j\}$. 
Since $\GG$ is inclusion maximal we may assume that $A=[n]\setminus\{i\}$.
Then $H:=[n]\setminus\{j\}\not\in\GG$, because otherwise
$s_{i,j}(A)=A\in\HH$ but $A\not\in\FF_0^t$.
Thus every subset of $H$ cannot be
in $\GG$, in other words, $j\in G$ for all $G\in\GG$.
Let $T=[t-1]\cup\{j\}$. Since $T$ is a $t$-element set, we have
$T\not\in\GG$, but $\GG\cup\{T\}$ is still 3-wise $t$-intersecting,
which contradicts the assumption that $\GG$ is size maximal.
\end{proof}

\begin{lemma}\label{lemma:F1}
Let $\GG\subset 2^{[n]}$ be a $(3,t)$-maximal family, 
and let $\HH=\sigma_{i,j}(\GG)$.
If $\HH\subset\FF_1^t$ then $\GG\subset\FF$ for some $\FF$ isomorphic to 
$\FF_1^t$, and thus $\GG\cong\FF_1^t$ and $\HH=\FF_1^t$.
\end{lemma}

\begin{proof}
We may assume that $i=t+3$, $j=t+4$.
For $l\in[t+2]$ let 
$G_l:=[n]\setminus\{l,i\}$ and $K_l:=[n]\setminus\{l,j\}$.
Note that $|\{G_l,K_l\}\cap\GG|\leq 1$. Indeed, if $\{G_l,K_l\}\subset\GG$ then
$s_{i,j}(G_l)=G_l\in\HH$, but $G_l\not\in\FF_1^t$, a contradiction.

It follows from $\sigma_{i,j}(\GG)\subset\FF_1^t$ that
$|[t+2]\setminus G|\leq 1$ for all $G\in\GG$. Thus we have
$|[t+2]\cap G\cap G'|\geq t$ for all $G,G'\in\GG$.
We also note that $|[t+4]\setminus G|\leq 2$ for all
$G\in\GG$, and if $G\in\GG$ satisfies $|[t+4]\setminus G|=2$ then $|G\cap\{i,j\}|\leq 1$.

Let $\tilde\GG:=\{G\in\GG:|[t+4]\setminus G|=2\}$. 
If $j\not\in G$ for all $G\in\tilde\GG$ then $\GG\subset\FF_1^t$. 
If $i\not\in G$ for all $G\in\tilde\GG$ then 
$\GG\subset\{G:|G\cap([t+2]\cup\{j\})|\geq t+2\}\cong\FF_1^t$. 
So we may assume that there exist some $l,m\in[t+2]$ such that
$G_l,K_m\in\tilde\GG$. Then $\{K_l,G_m\}\cap\GG=\emptyset$, and so $m\neq l$.
Since $G_m\not\in\GG$ there exist $A,B\in\GG$ such that 
$|A\cap B\cap G_m|\leq t-1$. Then we may assume that 
$[t+4]\setminus A=\{k,j\}$ for some $k\in[t+2]\setminus\{l,m\}$,
and so $K_k\in\GG$.

Since $K_l\not\in\GG$ there exist $C,D\in\GG$ such that $|C\cap D\cap K_l|\leq t-1$. 
We also have $|C\cap D\cap K_l\cap[t+2]|\geq t-1$. Thus we have
$|C\cap D\cap K_l\cap[t+2]|= t-1$ and $C\cap D\cap K_l\cap [i,n]=\emptyset$,
in particular, $i\not\in C\cap D$ and $C\cap D\cap [j+1,n]=\emptyset$.

As $i\not\in C\cap D$, suppose that $i\not\in C$.
Let $[t+2]\setminus C=\{c\}$ and $[t+2]\setminus D=\{d\}$.
Since one of $m$ and $k$ does not coincide with $d$, we may assume that $m\neq d$.
Since $C\subset G_c\in\GG$ we have $K_c\not\in\GG$, and so $c\neq m$. Thus $c,d$ and $m$
are all distinct, and we have $|C\cap D\cap K_m|=t-1$, a contradiction.
\end{proof}

By Lemmas~\ref{lemma:F0} and \ref{lemma:F1} we have the following.

\begin{lemma}\label{lemma:matome}
Let $\GG\subset 2^{[n]}$ be a $(3,t)$-maximal family, and let $\HH$ be 
a shifted family obtained from $\GG$ by shifting. 
If $\HH\subset\FF_i^t$ for $i=0$ or $i=1$, then $\GG\cong\FF_i^t$.
\end{lemma}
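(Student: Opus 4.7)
The plan is to prove Lemma~\ref{lemma:matome} by induction on the number $m$ of shifting operations used to transform $\GG$ into $\HH$. For the base case $m=0$, we have $\HH=\GG\subset\FF_i^t$; since both $\GG$ and $\FF_i^t$ are $(3,t)$-maximal (the latter being a routine check), this inclusion forces $\GG=\FF_i^t$. The case $m=1$ is exactly Lemma~\ref{lemma:F0} when $i=0$ and Lemma~\ref{lemma:F1} when $i=1$.

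For the inductive step, let $\sigma_{a,b}$ denote the first shift in the sequence producing $\HH$ from $\GG$, and set $\GG_1:=\sigma_{a,b}(\GG)$, so that $\HH$ is obtained from $\GG_1$ by $m-1$ further shifts. The key subclaim is that $\GG_1$ is itself $(3,t)$-maximal. Granted this, the inductive hypothesis yields $\GG_1\cong\FF_i^t$; writing $\GG_1=\tau(\FF_i^t)$ for some permutation $\tau$ of $[n]$, the conjugation identity $\tau^{-1}\circ\sigma_{a,b}=\sigma_{\tau^{-1}(a),\tau^{-1}(b)}\circ\tau^{-1}$ (valid on families) shows that $\sigma_{\tau^{-1}(a),\tau^{-1}(b)}(\tau^{-1}(\GG))=\FF_i^t$, and since $\tau^{-1}(\GG)$ is $(3,t)$-maximal, Lemma~\ref{lemma:F0} or~\ref{lemma:F1} applied to $\tau^{-1}(\GG)$ gives $\tau^{-1}(\GG)\cong\FF_i^t$, whence $\GG\cong\FF_i^t$.

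The main obstacle is verifying the subclaim that a single shift preserves $(3,t)$-maximality. I would argue by contradiction: suppose some $F\notin\GG_1$ is such that $\GG_1\cup\{F\}$ is 3-wise $t$-intersecting. Using the explicit membership criterion for $\sigma_{a,b}(\GG)$, in each of the four cases for $F\cap\{a,b\}\in\{\emptyset,\{a\},\{b\},\{a,b\}\}$ one identifies either $F$ itself or its $(a,b)$-swap as missing from $\GG$, and $(3,t)$-maximality of $\GG$ supplies a pair $G_1,G_2\in\GG$ witnessing non-extendibility. The task is then to exhibit $H_1,H_2\in\GG_1$ with $|F\cap H_1\cap H_2|<t$; this is achieved by taking $H_k=s_{a,b}(G_k)$ directly, or --- in the subtlest case $F\cap\{a,b\}=\{a,b\}$, where the naive shift can inflate the triple intersection by one --- by replacing a suitable $G_k$ with its $(a,b)$-swap, which lies in $\GG_1$ precisely when it already lies in $\GG$, and arguing via upward-closedness of $\GG$ (inherited from $(3,t)$-maximality) to produce the required bad pair and close the contradiction.
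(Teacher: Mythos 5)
Your overall architecture is the natural fleshing-out of what the paper does: the paper derives Lemma~\ref{lemma:matome} from the single-shift Lemmas~\ref{lemma:F0} and \ref{lemma:F1} with no further argument, and your induction on the number of shifts, the base case, and the conjugation/relabelling step are exactly the intended skeleton (and are fine). The entire weight of your proof therefore rests on the ``key subclaim'' that one shift applied to a $(3,t)$-maximal family is again $(3,t)$-maximal, and this is where there is a genuine gap. Note first that the subclaim is not a routine check that can be waved through: without maximality of the intermediate family the inductive step fails outright. For example, with $i=t$, $j=t+1$, the four sets $[t-1]\cup\{t,t+2,t+3,t+4\}$, $[t-1]\cup\{t,t+1,t+3,t+4\}$, $[t-1]\cup\{t+1,t+2,t+3\}$, $[t-1]\cup\{t+1,t+2,t+4\}$ form a $3$-wise $t$-intersecting family whose image under $\sigma_{t,t+1}$ is contained in $\FF_0^t$, yet the family has no common $t$-element set and so lies in no copy of $\FF_0^t$. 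So the inductive step really does need the intermediate families to be maximal (or some substitute for this), and it must be proved, not asserted.

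Your sketched proof of the subclaim does not establish it. The critical situation is not only $F\cap\{a,b\}=\{a,b\}$ but any configuration with $a\in F$ (the element gained by $\sigma_{a,b}$), in particular $F\cap\{a,b\}=\{a\}$: take a witness pair $G_1,G_2\in\GG$ with $|F\cap G_1\cap G_2|=t-1$, where $G_1$ is moved ($b\in G_1$, $a\notin G_1$, its swap not in $\GG$), $G_2$ contains $a$, and $a,b\notin F\cap G_1\cap G_2$. Then every replacement you propose --- passing to $s_{a,b}(G_1),s_{a,b}(G_2)$, replacing the moved $G_1$ by its $(a,b)$-swap (which, contrary to your parenthetical, lies in $\sigma_{a,b}(\GG)\setminus\GG$ precisely because $G_1$ was moved), or using upward-closedness to replace $G_1$ by $G_1\cup\{a\}$ --- yields a triple intersection with $F$ of size exactly $t$, so no violating pair in $\sigma_{a,b}(\GG)\cup\{F\}$ is produced and no contradiction results. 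Excluding this configuration appears to require using the maximality of $\GG$ against further sets, globally, not just one exchange on one witness pair, and the proposal gives no argument for this; it is not even clear to me that the subclaim is true without such an argument. (The paper itself is silent on this point, treating the multi-shift statement as immediate from the single-shift lemmas, but your write-up explicitly hinges on the subclaim, so as it stands the proof is incomplete at its decisive step.)
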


\subsection{Inequalities for the measure of 2-wise $t$-intersecting families}

\begin{lemma}\label{EKR}
Let $\GG\subset 2^{[n]}$ be a $2$-wise $t$-intersecting family.
\begin{itemize}
 \item[(i)] If $0<p\leq 1/2$ then $\mu_p(\GG)\leq(p/q)^t$.
 \item[(ii)] If $0<p\leq\frac1{t+1}$ then $\mu_p(\GG)\leq p^t$.
\end{itemize}
\end{lemma}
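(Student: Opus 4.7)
The two parts of the lemma call for quite different treatments. Part (ii) is identical in content to Theorem~\ref{thm1} of the introduction (the $p$-biased Ahlswede--Khachatrian theorem in the regime $p \le 1/(t+1)$), so I would simply invoke that result, with no further argument.

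For part (i), my plan is first to reduce to the shifted case via Lemma~\ref{lemma:easy-facts} (shifting preserves both $\mu_p$ and the $2$-wise $t$-intersecting property), and then to invoke the full $p$-biased Ahlswede--Khachatrian classification to get
\[
\mu_p(\GG) \le \max_{r \ge 0}\mu_p(\FF_r^t(2)), \qquad \FF_r^t(2) := \{F \subseteq [n] : |F \cap [t+2r]| \ge t+r\}.
\]
This reduces the whole claim to the purely analytic binomial tail estimate $\Pr[\mathrm{Bin}(t+2r,\,p) \ge t+r] \le (p/q)^t$ for every $r \ge 0$ and every $p \in (0,1/2]$.

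This remaining step is where I expect the main technical difficulty. The case $r = 0$ is immediate, since $\Pr[\mathrm{Bin}(t,p) \ge t] = p^t \le (p/q)^t$ because $q \le 1$. For $r \ge 1$, the natural tool is the Chernoff-type exponential moment inequality $\Pr[\mathrm{Bin}(n,p) \ge m] \le (q + p\lambda)^n \lambda^{-m}$; the choice $\lambda = q/p \ge 1$ reduces the target to the elementary inequality $(2q)^t(4pq)^r \le 1$, which is sharp near $p = 1/2$ (where $2q = 1$ and $4pq = 1$) but becomes loose as $p$ decreases. The obstacle is thus obtaining a bound uniform in both $p \le 1/2$ and $r \ge 1$: no single fixed $\lambda$ suffices. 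My plan is to split the analysis into a small-$r$ and a large-$r$ sub-regime, bounding the tail in the small-$r$ range by its leading term $\binom{t+2r}{t+r}p^{t+r}q^r$ times a convergent geometric correction (the ratio $(t+2r-j)p/((j+1)q)$ of consecutive binomial terms is strictly less than $1$ for $p \le 1/2$ and $j \ge t+r$), and in the large-$r$ range by the Chernoff estimate above with $\lambda = q/p$. Each sub-case should then reduce to a clean inequality in $p$, closing out part (i).
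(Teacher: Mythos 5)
There is a genuine gap, and it sits exactly where you predicted the difficulty would be. First, a small point on (ii): Theorem~\ref{thm1} as stated covers only $p<\frac1{t+1}$, while (ii) includes the endpoint $p=\frac1{t+1}$; you need either a one-line continuity argument (for fixed $n$, $\mu_p(\GG)$ is a polynomial in $p$, so the inequality passes to the closed endpoint) or the stronger form of the weighted Ahlswede--Khachatrian theorem. This is in fact what the paper does: it does not prove the lemma at all, quoting Theorem~15.7 of \cite{EPFS} (random walk method) for (i) and \cite{Filmus,Tuvsw} for (ii) including the endpoint.

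The substantive problem is in (i). The reduction to $\mu_p(\GG)\le\max_r\Pr[\mathrm{Bin}(t+2r,p)\ge t+r]$ via the full weighted AK theorem is legitimate for $p<\frac12$ (and at $p=\frac12$ the claim is trivial since $(p/q)^t=1$), but the remaining tail inequality is only a plan, and the proposed two-regime split does not close as described. Writing $\epsilon=\frac12-p$, Chernoff with the fixed choice $\lambda=q/p$ needs $(2q)^t(4pq)^r\le1$, i.e.\ roughly $r\ge t\ln(2q)/\ln\frac1{4pq}\approx t/(2\epsilon)$; on the other hand the leading-term bound, whose geometric correction factor is about $\frac{(t+r+1)q}{(t+1)q+2r\epsilon}$, exceeds $(p/q)^t$ once $p$ is within $O(1/t)$ of $\frac12$ and $r$ is near $tp/(q-p)\approx t/(4\epsilon)$ --- precisely the regime where the target inequality is asymptotically tight (it is the equality case $\frac{t+r}{t+2r}=q$), so any bound losing a factor of order $\sqrt{t}$ there fails. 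Thus a window of pairs $(p,r)$ is covered by neither sub-case. Two clean repairs: (a) use the optimized Chernoff bound and verify $(t+2r)\,D\bigl(\tfrac{t+r}{t+2r}\,\big\|\,p\bigr)\ge t\ln(q/p)$, a one-variable convexity check whose minimum is exactly $0$ at $\tfrac{t+r}{t+2r}=q$; or (b) bypass AK entirely, as the paper's cited source does, by the random-walk observation that $\Pr[\mathrm{Bin}(t+2r,p)\ge t+r]$ is at most the probability that a $p$-biased $\pm1$ walk ever reaches height $t$, which equals $(p/q)^t$ for $p\le\frac12$ (gambler's ruin); this single inequality proves all of (i) at once. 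As written, your argument does not yet establish the lemma.
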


For the proof of (i), see Theorem 15.7 in \cite{EPFS}, which is
a consequence of the so-called random walk method.
In Theorem~\ref{thm1} we have $\mu_p(\GG)\leq p^t$ 
not only for $p<\frac1{t+1}$ but also for $p=\frac1{t+1}$,
and we have (ii), see e.g., \cite{Filmus,Tuvsw}.
It is also known that
if $p=\frac1{t+1}$ then $\mu(\GG)=p^t$ holds if and
only if $\GG\cong\GG^*=\FF_0^t(2)$ or $\GG\cong\FF_1^t(2)$,
but we do not use this fact.

\section{Proof of Theorem~\ref{thm5}}\label{sec:proof}
We prove the following stronger statement, which immediately implies the
theorem.
\begin{prop}
Let $t\geq 241$ and $0<p\leq p_0(t)$.
If $\GG\subset 2^{[n]}$ is a $3$-wise $t$-intersecting family, then
one of the following holds.
\begin{itemize}
\item[(i)] We have $\mu_p(\GG)<\frac12p^t$.
\item[(ii)] We have $\GG\subset\FF$ for some $\FF$ isomorphic to $\FF_0^t$
or $\FF_1^t$.
\item[(iii)] There exists a $3$-wise intersecting family $\tilde\GG$ 
with $\GG\subset\tilde\GG$ such that a shifted family $\HH$ obtained from 
$\tilde\GG$ by shifting satisfies $\HH\subset\FF_2^t$.
Moreover, for every $\delta>0$ we have
$\mu_p(\GG)<(\frac12+\delta)p^t$ provided $t>t_0(\delta)$.
\end{itemize}
\end{prop}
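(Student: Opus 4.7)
The strategy is to reduce to the shifted case, dispose of the families comparable to $\FF_0^t$, $\FF_1^t$, $\FF_2^t$ directly, and bound the measure in the remaining case by a decomposition argument. First I would extend $\GG$ to a $(3,t)$-maximal family $\tilde\GG\supset\GG$, which only increases the $p$-biased measure. Then, by Lemma~\ref{lemma:easy-facts}, iterating the shifting operation produces a shifted $3$-wise $t$-intersecting family $\HH$ with $\mu_p(\HH)=\mu_p(\tilde\GG)\geq\mu_p(\GG)$. The remainder of the argument splits according to whether $\HH$ is contained in one of $\FF_0^t$, $\FF_1^t$, $\FF_2^t$.

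If $\HH\subset\FF_0^t$ or $\HH\subset\FF_1^t$, then Lemma~\ref{lemma:matome}, applied along the chain of shifts from $\tilde\GG$ to $\HH$, forces $\tilde\GG\cong\FF_0^t$ or $\tilde\GG\cong\FF_1^t$, so $\GG\subset\tilde\GG$ lies in an isomorphic copy of $\FF_0^t$ or $\FF_1^t$ and (ii) holds. If instead $\HH\subset\FF_2^t$ (but $\HH$ is contained in neither $\FF_0^t$ nor $\FF_1^t$), then (iii) holds with witness $\tilde\GG$: the measure estimate follows from $\mu_p(\GG)\leq\mu_p(\HH)\leq\mu_p(\FF_2^t)$, combined with the computation in subsection~\ref{subsec:1/2} showing $\max_{0<p\leq p_0}\mu_p(\FF_2^t)/p^t\to 1/2$, so that $t>t_0(\delta)$ forces this ratio to be at most $\tfrac12+\delta$.

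The substantive case is when $\HH$ is not contained in any of $\FF_0^t$, $\FF_1^t$, $\FF_2^t$. Here I would invoke Lemma~\ref{lemmaMIFR} (the decomposition lemma alluded to in the introduction) to partition $\HH$ into shifted subfamilies, each carrying additional intersecting structure beyond the $3$-wise $t$-intersecting property. The key observation is that shiftedness together with the failure of $\HH\subset\FF_i^t$ for $i\leq 2$ produces explicit ``witness'' sets in $\HH$ whose intersections with the remainder of an arbitrary pair of elements of $\HH$ pin down a $2$-wise $t'$-intersecting trace on the complement of a small fixed set, for some $t'\geq t$ depending on the subfamily type. Applying Lemma~\ref{EKR}(ii) when $p\leq\tfrac1{t'+1}$ and Lemma~\ref{EKR}(i) otherwise, and reassembling with the $p^aq^b$ weights from the fixed part, should yield $\mu_p(\HH)<\tfrac12 p^t$, establishing (i).

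The hardest part is this final aggregate bound. Since $p$ ranges over the $t$-dependent interval $(0,p_0(t)]$ with $p_0(t)\sim 1/\sqrt{t}$, the contributions of the various subfamily types must be compared through the ratio $\mu_p(\cdot)/p^t$, and the desired inequality has to hold \emph{uniformly} in $p$ on this interval. The explicit threshold $t\geq 241$ is expected to emerge from optimising this aggregate estimate over the finitely many subfamily types produced by Lemma~\ref{lemmaMIFR}; a secondary technical point is verifying that the hypothesis $\HH\not\subset\FF_i^t$ for $i=0,1,2$ genuinely forces the traces to be $2$-wise $t'$-intersecting with $t'$ large enough for the random-walk bound to outperform the naive bound by the required margin.
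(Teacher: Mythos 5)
Your reduction to the shifted case and your handling of cases (ii) and (iii) match the paper: extend to a $(3,t)$-maximal $\tilde\GG$, shift to $\HH$, use Lemma~\ref{lemma:matome} when $\HH\subset\FF_0^t$ or $\FF_1^t$, and in the $\HH\subset\FF_2^t$ case deduce the bound $(\tfrac12+\delta)p^t$ from the computation in subsection~\ref{subsec:1/2}. The problem is case (i), which is the substance of the Proposition, and there your proposal has a genuine gap. First, your description of the decomposition is wrong in a crucial detail: the traces obtained after fixing a ``hole'' are \emph{not} $2$-wise $t'$-intersecting with $t'\geq t$. In the paper one introduces the parameter $h=\min\{i:|H\cap[t+i]|\geq t\ \forall H\in\HH\}$, and for each $i\leq h$ the family $\TT_i$ of traces on $[t+h,n]$ of sets whose rightmost hole in $[t+h-1]$ has size $i$ is only $2$-wise $(2i+1)$-intersecting (and $(2h+2)$-intersecting for $i=h$ when $\HH\not\subset\FF_h^t$); these levels are tiny compared with $t$, and the smallness of $\mu_p(\HH)$ comes from combining the bounds $\mu_p(\TT_i)\leq p^{2i+1}$ with the weights $\binom{t+h-1}{i}p^{t+h-1-i}q^i$ coming from the fixed part. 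A trace bound with $t'\geq t$ is simply not available, so the plan as stated cannot be executed.

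Second, even granting the correct trace structure, your sketch defers exactly the work that makes the proof go through: there is no definition of $h$, no analogue of the weighted-sum inequality over hole sizes, and no case analysis. The paper's argument splits into $h\geq\tfrac{\sqrt t}2-\tfrac54$ (where one forgets the decomposition and applies Lemma~\ref{EKR}(i) to $\HH$ itself, using that $\HH$ is $2$-wise $(t+h)$-intersecting), $4\leq h\leq\tfrac{\sqrt t}2-\tfrac54$ (where the upper bound on $h$ is what guarantees $p_0\leq\tfrac1{2i+2}$ so that Lemma~\ref{EKR}(ii) applies to each $\TT_i$), and the sporadic cases $h=1,2,3$; it is precisely in $h=1$ and $h=2$ that the hypotheses $\HH\not\subset\FF_1^t$ and $\HH\not\subset\FF_2^t$ are used, via the upgrade of $\TT_h$ from $(2h+1)$- to $(2h+2)$-intersecting, without which the resulting bound is of order a constant rather than $o(1)$ (e.g.\ for $h=1$ one would get $p+tp^2q\approx 1$ instead of $p+tp^3q=O(1/\sqrt t)$). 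The threshold $t\geq 241$ comes from the large-$h$ estimate, not from ``optimising over finitely many subfamily types.'' Since your proposal neither identifies the parameter governing the case split nor verifies any of the uniform-in-$p$ estimates on $(0,p_0(t)]$, and its one concrete structural claim ($t'\geq t$) is false, the main case remains unproved.
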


\begin{proof}
Let $t\geq 241$, $0<p\leq p_0$, and let $\GG\subset 2^{[n]}$ be a 3-wise
$t$-intersecting family. By adding subsets to $\GG$ if necessary, we get 
a $(3,t)$-maximal family $\tilde \GG$ such that $\GG\subset\tilde\GG$.
Let $\HH\subset 2^{[n]}$ be a shifted 3-wise $t$-intersecting family obtained 
from $\tilde\GG$ by shifting. 
If $\HH\subset\FF_0^t$ or $\FF_1^t$, then we have 
$\tilde\GG\cong\FF_0^t$ or $\tilde\GG\cong\FF_0^1$ by Lemma~\ref{lemma:matome}.
In this case we get (ii) because $\GG\subset\tilde\GG$.
If $\HH\subset\FF_2^t$, then we have (iii). (The second statement follows from
\eqref{eq:1/2}.) So we may assume that 
$\HH\not\subset\FF_0^t$, $\HH\not\subset\FF_1^t$ and $\HH\not\subset\FF_2^t$.
We will show that (i) holds. To this end it suffices to show that 
$\mu_p(\HH)<\frac12p^t$ because $\mu_p(\GG)\leq\mu_p(\tilde\GG)=\mu_p(\HH)$.

We borrow some ideas from \cite{T2007b}, which was inspired by 
\cite{Frankl1991}.
Let $s$ be such that $\HH$ is 2-wise $s$-intersecting, 
but it is not 2-wise $(s+1)$-intersecting. Since $\HH\not\subset\FF_0^t$ and
$\HH$ is 3-wise $t$-intersecting, we have $s\geq t+1$. Define $h=h(\HH)$ by
\[
 h:=\min\{i:|H\cap[t+i]|\geq t\text{ for all }H\in\HH\}.
\]
By definition there exists $H_0\in\HH$ such that $|H_0\cap[t+(h-1)]|=t-1$.
Since $\HH$ is shifted, we may assume that 
\begin{align}\label{H0}
H_0\cap[t+h-1]=[t-1].
\end{align}
For $A\subset[t+h-1]$ let 
\[
\TT(A):=\{H\cap[t+h,n]:H\in\HH,\,[t+h-1]\setminus H=A\}\subset 2^{[t+h,n]}. 
\]
If $H\in\HH$ and $H\setminus[t+h-1]\in\TT(A)$, then 
$H$ has a `hole' $A$ in $[t+h-1]$.
For the rightmost hole of size $i$, we define 
$\TT_i:=\TT([t+h-i,t+h-1])\subset 2^{[t+h,n]}$. 
Since $\HH$ is shifted, we have $\TT(A)\subset\TT_i$ for all 
$A\in\binom{[t+h-1]}i$. Thus we have
\begin{align}\label{eq:sum of G(Ti)}
\mu_p(\HH)
\leq\sum_{i=0}^h\binom {t+h-1}i p^{t+h-1-i}q^i\mu_p(\TT_i:[t+h,n]).
\end{align}

The following result is a special case of Claims 2, 3 and 4 in \cite{T2007b},
and we include a proof for convenience.

\begin{lemma}\label{lemmaMIFR}
We have $1\leq h\leq s-t$, and for $0\leq i\leq h$ the family
$\TT_i$ is $2$-wise $(2i+1)$-intersecting.
If $\HH\not\subset\FF_h^t$ then $\TT_h$ is $2$-wise $(2h+2)$-intersecting.
\end{lemma}

\begin{proof}
Since $\HH\not\subset\FF_0^t$ we have $h\geq 1$. By the definition of $s$ there
are $H_1,H_2\in\HH$ such that $|H_1\cap H_2|=s$, and we may assume that
$H_1\cap H_2=[s]$ because $\HH$ is shifted. Then, since $\HH$ is 3-wise
$t$-intersecting, it follows that $|H\cap[s]|\geq t$ for all $H\in\HH$.
By the definition of $h$ we have $s\geq t+h$, that is, $h\leq s-t$.

To show that $\TT_i$ is 2-wise $(2i+1)$-intersecting, suppose, to the 
contrary, that there exist $T_1,T_2\in\TT_i$ such that 
$T_1\cap T_2=I_1\cup I_2$ with $|I_1|=|I_2|=i$. 
Note that $I_1$ and $I_2$ are not necessarily disjoint.
Let $A=[t+h-i,t+h-1]$ be the hole for $\TT_i$, and 
for $j=1,2$ let $H_j:=([t+h-1]\setminus A)\cup T_j\in\HH$. Since $\HH$ is
shifted we have $H_j':=(H_j\setminus I_j)\cup A\in\HH$ and 
$H_1'\cap H_2'=[t+h-1]$. Then, by \eqref{H0}, we have $H_0\in\HH$ and
$H_0\cap H_1'\cap H_2'=[t-1]$, which contradicts that $\HH$ is 3-wise
$t$-intersecting.

Suppose that $\HH\not\subset\FF_h^t$. Assume for contradiction that 
there are $T_1,T_2\in\TT_h$ such that $|T_1\cap T_2|=2h+1$.
Since $\HH$ is shifted, we may assume that there are $H_1,H_2\in\HH$ such 
that $H_1\cap H_2=[t-1]\cup[t+h,t+3h]$, 
and $H_1':=(H_1\setminus[t+h,t+2h-1])\cup[t,t+h-1]\in\HH$.
Then $H_1'\cap H_2=[t+3h]\setminus[t,t+2h-1]$.
Since $\HH\not\subset\FF_h^t$ and $\HH$ is shifted, there exists
$H_3$ such that $H_3\cap[t+3h]=[t+2h-1]$.
But $H_1'\cap H_2\cap H_3=[t-1]$, 
which contradicts the assumption that $\HH$ is 3-wise $t$-intersecting.
Consequently $\TT_h$ is 2-wise $(2h+2)$-intersecting. 
\end{proof}

We will complete the proof of Proposition, and hence Theorem~\ref{thm5}, by showing
$\mu_p(\HH)<p^t/2$ in a handful of different cases (Lemmas~\ref{lemma:h=1}--\ref{lemma:h big}) based on the value of $h$.

\begin{lemma}\label{lemma:h=1}
If $h=1$ and $t\geq 28$ then $\mu_p(\HH)<\frac12p^t$.
\end{lemma}
\begin{proof}
It follows from Lemma~\ref{lemmaMIFR} that $\TT_0$ is 2-wise 1-intersecting, 
and $\TT_1$ is 2-wise 4-intersecting because $\HH\not\subset\FF_1^t$.
If $t\geq 28$ then $p_0\leq \frac15$, and it follows from (ii) of Lemma~\ref{EKR} that $\mu_p(\TT_0:[t+h,n])\leq p$ and
$\mu_p(\TT_1:[t+h,n])\leq p^4$ for $p\leq p_0$.
Then, by \eqref{eq:sum of G(Ti)}, we have
\begin{align*}
\frac{\mu_p(\HH)}{p^t}&\leq \mu_p(\TT_0:[t+1,n])+t\,\frac qp\,\mu_p(\TT_1[t+1,n])\\
&\leq p+tp^3q\leq p_0+tp_0^3q_0
=\frac2{\sqrt t}+O(1/t).
\end{align*}
Thus $\mu_p(\HH)/p^t\to 0$ as $t\to\infty$. 
Moreover, $p_0+tp_0^3q_0<\frac12$ for $t\geq 15$.
\end{proof}

\begin{lemma}
If $h=2$ and $t\geq 54$ then $\mu_p(\HH)<\frac12p^t$.
\end{lemma}
\begin{proof}
By Lemma~\ref{lemmaMIFR}, $\TT_i$ is 2-wise $(2i+1)$-intersecting for $i=0,1$.
Also $\TT_2$ is 2-wise 6-intersecting, where we use $\HH\not\subset\FF_2^t$.
To ensure from (ii) of Lemma~\ref{EKR}
that $\mu_p(\TT_2:[t+2,n])\leq p^6$, we need that $p\leq\frac17$,
which follows from $t\geq 54$, or equivalently $p_0\leq\frac17$.
We also have $\mu_p(\TT_0:[t+2,n])\leq p$ and $\mu_p(\TT_1:[t+2,n])\leq p^3$.
Using \eqref{eq:sum of G(Ti)} we have
\begin{align}
\frac{\mu_p(\HH)}{p^t}&\leq
\sum_{i=0}^2\binom{t+1}ip^{1-i}q^i\mu_p(\TT_i:[t+2,n])\nonumber\\
&\leq p^2+(t+1)p^3q+\binom{t+1}2p^5q^2\nonumber\\
&\leq p_0^2+(t+1)p_0^3q_0+\frac12(t+1)t\,p_0^5q_0^2\label{eq:h=2}\\
&=\frac3{2\sqrt t}+O(1/t).\nonumber
\end{align}
Thus \eqref{eq:h=2} is decreasing in $t$,
and $\mu_p(\HH)<\frac12p^t$ for $t\geq 10$.
\end{proof}

\begin{lemma}
If $h=3$ and $t\geq 70$ then $\mu_p(\HH)<\frac12 p^t$.
\end{lemma}
\begin{proof}
If $t\geq 70$ then $p_0\leq\frac1{8}$. 
For $0\leq i\leq h$, $\TT_i$ is 2-wise $(2i+1)$-intersecting 
by Lemma~\ref{lemmaMIFR}. 
As $p\leq p_0\leq\frac1{8}=\frac1{(2h+1)+1}\leq\frac1{(2i+1)+1}$,
it follows from (ii) of Lemma~\ref{EKR} that $\mu_p(\TT_i:[t+3,n])\leq p^{2i+1}$.
Then it follows from \eqref{eq:sum of G(Ti)} that
\begin{align}
\frac{\mu_p(\HH)}{p^t}&\leq
\sum_{i=0}^3\binom{t+2}ip^{2-i}q^i\mu_p(\TT_i:[t+3,n])\nonumber\\
&\leq \sum_{i=0}^3\binom{t+2}ip^{3+i}q^i
\leq \sum_{i=0}^3\binom{t+2}ip_0^{3+i}q_0^i\label{eq:h=3}\\
&=\frac16+O(1/{\sqrt t}).\nonumber
\end{align}
Then \eqref{eq:h=3} is decreasing in $t$, and less than 
$\frac12$ for $t\geq 4$.
\end{proof}

\begin{lemma}\label{lemma:h middle}
If $4\leq h\leq\frac{\sqrt t}2-\frac54$ and $t\geq 111$ then $\mu_p(\HH)<\frac12p^t$.
\end{lemma}

\begin{proof}
We need $t\geq 111$ because $4\leq \frac{\sqrt t}2-\frac54$.
Let $1\leq i\leq h$. It follows from Lemma~\ref{lemmaMIFR} that 
$\TT_i\subset 2^{[t+h,n]}$ is 2-wise $(2i+1)$-intersecting.
We claim that $p\leq\frac1{(2i+1)+1}$.
Indeed, using $h\leq\frac{\sqrt t}2-\frac54$, or equivalently,
$\frac2{\sqrt{4t}-1}\leq\frac1{2(h+1)}$, we have
\[
 p\leq p_0=\frac2{\sqrt{4t+9}-1}\leq\frac2{\sqrt{4t}-1}\leq
\frac1{2(h+1)}\leq\frac1{2(i+1)}.
\]
Thus, by (ii) of Lemma~\ref{EKR}, we have $\mu_p(\TT_i:[t+h,n])\leq p^{2i+1}$, 
and so by \eqref{eq:sum of G(Ti)}, 
\begin{align*}
\mu_p(\HH)&\leq\sum_{i=0}^h\binom {t+h-1}ip^{t+h-1-i}q^i\,p^{2i+1}
=p^t\,\sum_{i=0}^h\binom{t+h-1}ip^{h+i}q^i.
\end{align*}

We need to show that 
$\sum_{i=0}^h\binom{t+h-1}ip^{h+i}q^i<\frac12$.
As $pq$ is increasing in $p$ (for $p<\frac12$) it follows that
$p^{h+i}q^i=p^h(pq)^i\leq p_0^{h+i}q_0^i$, and
$\binom{t+h-1}ip^{h+i}q^i\leq\binom{t+h-1}ip_0^{h+i}q_0^i$.
We next claim that 
$\binom{t+h-1}ip_0^{h+i}q_0^i\leq\frac{27}{50}\binom{t+h-1}{i+1}p_0^{h+i+1}q_0^{i+1}$,
or equivalently, 
\begin{align}\label{p0q0/2}
 \frac{i+1}{t+h-i-1}\leq \frac{27}{50}\,p_0q_0. 
\end{align}
Since the LHS is maximized at $i=h$, it suffices to show that
$\frac{h+1}{t-1}\leq \frac{27}{50}p_0q_0$. For the LHS we use 
$\frac{h+1}{t-1}\leq\frac{\sqrt t-\frac12}{2(t-1)}\leq\frac1{2\sqrt t}$. 
Thus it suffices to show $1\leq\frac{27}{25}\sqrt t\,p_0q_0=:f(t)$. 
Noting that
\[
 f(t)=\frac{54 \sqrt{t} \left(\sqrt{4t+9}-3\right)}{25\left(\sqrt{4 t+9}-1\right)^2},
\]
we have
\[
\frac d{dt}f(t)=\frac{108 \left(t+3 \sqrt{4t+9}-9\right)}
{25\sqrt{t}\sqrt{4 t+9} \left(\sqrt{4t+9}-1\right)^3}>0,
\]
and $f(87)>1$. Thus we get \eqref{p0q0/2} for $t\geq 87$. We apply
$\binom{t+h-1}ip_0^{h+i}q_0^i\leq\frac{27}{50}\binom{t+h-1}{i+1}p_0^{h+i+1}q_0^{i+1}$
repeatedly, and we have
\begin{align*}
\sum_{i=0}^h\binom{t+h-1}ip_0^{h+i}q_0^i
&\leq\sum_{i=0}^h\left(\frac{27}{50}\right)^{h-i}\binom{t+h-1}hp_0^{2h}q_0^h\\
&=\frac{50}{23}\left(1-\left(\frac{27}{50}\right)^{h+1}\right)\binom{t+h-1}h(p_0^2\,q_0)^h\\
&<\frac{50}{23}\binom{t+h-1}h(p_0^2\,q_0)^h.
\end{align*}
Using $\binom{t+h-1}h<\left(\frac{e(t+h-1)}h\right)^h$ and
$h\geq 4$ it follows that
\[
\binom{t+h-1}h(p_0^2\,q_0)^h<
\left(e\left(1+\frac th\right)p_0^2\,q_0\right)^h\leq
\left(e\left(1+\frac t4\right)p_0^2\,q_0\right)^h\leq 0.69^h,
\]
where the last inequality holds for $t\geq 51$.
Consequently we have
\[
\sum_{i=0}^h\binom{t+h-1}ip^{h+i}q^i<\frac{50}{23}\cdot 0.69^h<\frac12
\]
for $h\geq 4$, and $\mu_p(\HH)<\frac12p^t$.
\end{proof}

\begin{lemma}\label{lemma:h big}
If $h\geq\frac{\sqrt t}2-\frac54$ and
$t\geq 241$ then $\mu_p(\HH)<\frac12p^t$.
\end{lemma}
\begin{proof}
Since $s\geq t+h\geq t+\frac{\sqrt t}2-\frac54$, $\HH$ is 2-wise 
$\lceil t+\frac{\sqrt t}2-\frac54\rceil$-intersecting. Then, by 
(i) of Lemma~\ref{EKR}, we have
\[
 \mu_p(\HH)\leq\left(\frac pq\right)^{t+\frac{\sqrt t}2-\frac54}.
\]
Since both $p/q$ and $1/q$ are increasing in $p$, it follows that
\[
\frac{\mu_p(\HH)}{p^t}\leq
\left(\frac pq\right)^{\frac{\sqrt t}2-\frac54}
\left(\frac 1q\right)^{t}
\leq
\left(\frac {p_0}{q_0}\right)^{\frac{\sqrt t}2-\frac54}
\left(\frac 1{q_0}\right)^{t},
\]
where $p_0$ is defined by \eqref{p0}, and $q_0=1-p_0$. The RHS of the above
inequality is
\[
 \exp\left(-(\log t-4)\frac{\sqrt t}4+O(1/t)\right)\to 0
\]
as $t\to\infty$, and in particular, $\mu_p(\HH)/p^t<1/2$ 
for $t\geq 241$.
\end{proof}

From Lemmas~\ref{lemma:h=1}--\ref{lemma:h big} we have
$\mu_p(\HH)/p^t<1/2$ for all $h\geq 1$ and $t\geq 241$. 
We also mention that the threshold for $t$ is increasing as $h$ increases.
This completes the proof of Proposition. 
\end{proof}

\section{Concluding remarks}
In this note we prove a stability result Theorem~\ref{thm5} for
3-wise $t$-intersecting families, which is valid for $t\geq 241$.
It seems that the same result holds for smaller $t$.
More strongly we conjecture the following.
Recall the definition of the family $\QQ$ from \eqref{eq:Q}.
\begin{conj}
Let $t\geq 4$ and $0<p\leq p_0(t)$.
If $\GG\subset 2^{[n]}$ is a $3$-wise $t$-intersecting family with
$\mu_p(\GG)>\max\{\mu_p(\QQ)\,\,\mu_p(\FF_1^t),\,\mu_p(\FF_2^t)\}$, 
then $\GG\subset\FF$ for some $\FF$ such that $\FF\cong\FF_0^t$.
\end{conj}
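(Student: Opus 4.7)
The strategy parallels the proof of the proposition in this paper but must be strengthened in two orthogonal ways: (a) a new $\FF_2^t$-analogue of Lemma~\ref{lemma:F1} that turns what was alternative (iii) into a contradiction, and (b) sharper bounds that push the threshold on $t$ from $241$ down to every $t\ge 2$, and the slack in the measure from $(\tfrac12+\delta)p^t$ down to $\tfrac12 p^t$.

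\textbf{Shifting reduction and the $\FF_2^t$ lemma.} Extend $\GG$ to a $(3,t)$-maximal $\tilde\GG\supset\GG$ and apply Lemma~\ref{lemma:easy-facts} to obtain a shifted $3$-wise $t$-intersecting family $\HH$ with $\mu_p(\HH)=\mu_p(\tilde\GG)\ge\mu_p(\GG)$. If $\HH\subset\FF_0^t$, Lemma~\ref{lemma:matome} yields $\tilde\GG\cong\FF_0^t$ and hence the desired containment $\GG\subset\FF\cong\FF_0^t$. The central new ingredient is an analogue of Lemma~\ref{lemma:F1} for $\FF_2^t$: if $\tilde\GG$ is $(3,t)$-maximal and $\sigma_{i,j}(\tilde\GG)\subset\FF_2^t$, then $\tilde\GG\cong\FF_2^t$. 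One would imitate the proof of Lemma~\ref{lemma:F1} by taking $(i,j)=(t+5,t+6)$ on the boundary of the fixed $(t+6)$-element set, splitting the $(t+4)$-subsets of $\tilde\GG$ according to which of $i,j$ they miss, and in each sub-case either exhibiting a $t$-element intersection that forces $\tilde\GG\cong\FF_0^t$ (excluded) or showing the size-maximal closure already equals $\FF_2^t$. Granting this, Lemma~\ref{lemma:matome} extends to $i=2$, so $\HH\subset\FF_1^t$ or $\HH\subset\FF_2^t$ would give $\mu_p(\GG)\le\mu_p(\FF_1^t)$ or $\mu_p(\GG)\le\mu_p(\FF_2^t)$, contradicting the hypothesis.

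\textbf{Measure bound when $\HH\not\subset\FF_i^t$ for $i=0,1,2$.} In the remaining case one must show $\mu_p(\HH)\le\tfrac12 p^t$. Apply Lemma~\ref{lemmaMIFR} with $h=h(\HH)\ge 1$ and the inequality
\[
\mu_p(\HH)\le\sum_{i=0}^{h}\binom{t+h-1}{i}p^{t+h-1-i}q^i\,\mu_p(\TT_i:[t+h,n]).
\]
The hypothesis $\HH\not\subset\FF_2^t$ sharpens $\TT_2$ from $2$-wise $5$- to $2$-wise $6$-intersecting when $h=2$, and the existing arguments of the paper already handle $h\ge 4$ uniformly in $t$ via the geometric-series estimate. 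For $h\in\{1,2,3\}$ one invokes Lemma~\ref{EKR}(ii) with the sharper intersection parameters wherever $p\le p_0(t)$ makes this applicable; the resulting explicit polynomial in $p_0(t)$ must then be verified to be below $\tfrac12$ for every $t\ge 2$, which is a finite computation once the algebraic form is written down.

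\textbf{Main obstacle.} The real difficulty is closing the gap at small $t$. The closed-form bounds of the paper rely on $p_0(t)\le 1/(2h+3)$ and on the crude estimate $\binom{t+h-1}{h}<(e(1+t/h))^h$, both of which deteriorate exactly when $t$ is small; for $t$ in the low single digits the inequality $\mu_p(\HH)\le\tfrac12 p^t$ is genuinely tight and cannot be proved from the one-parameter analysis of $h$ alone. A plausible route is to exploit the $3$-wise (not merely $2$-wise) $t$-intersection \emph{across} pairs $\TT_i,\TT_j$ with $i+j$ large, refining the rightmost-hole decomposition in the spirit of \cite{T2007b,Frankl1991}; where this still fails, one would settle the smallest cases $t=2,3$ by a direct structural argument or a computer-assisted enumeration of shifted $3$-wise $t$-intersecting families with $\mu_p$ above the prescribed threshold.
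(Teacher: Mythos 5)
This statement is posed as an open conjecture in the concluding remarks; the paper does not prove it, and proves only the weaker Theorem~\ref{thm5}, which requires $t\geq 241$ and the measure threshold $(\tfrac12+\delta)p^t$. Your proposal is a sensible research plan along the paper's own lines, but it is not a proof: the two ingredients you identify as new are exactly the ones you do not supply. First, the $\FF_2^t$-analogue of Lemma~\ref{lemma:F1} is only described as something one ``would imitate''; the imitation is not routine, since the minimal sets of $\FF_2^t$ form a two-parameter family (the $\binom{t+6}{2}$ sets of size $t+4$ missing two elements of $[t+6]$), so the case analysis splits according to an unordered pair of missing elements rather than a single one, and the paper itself leaves even the general version of this step as a separate conjecture. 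Second, and more seriously, the quantitative heart of the argument fails outright for small $t$: for $t=2$ one has $p_0(2)=2/(\sqrt{17}-1)>\tfrac12$, so Lemma~\ref{EKR}(i) is not even applicable, and Lemma~\ref{EKR}(ii) applied to the $2$-wise $(2i+1)$-intersecting families $\TT_i$ requires $p\leq\frac{1}{2i+2}$, which is badly violated when $p$ is near $p_0(t)$ for small $t$. Your ``finite computation once the algebraic form is written down'' therefore has nothing to compute with, because the inputs $\mu_p(\TT_i:[t+h,n])\leq p^{2i+1}$ are unavailable in that range.

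You do acknowledge this obstacle honestly, but the proposed remedies (exploiting $3$-wise intersection across pairs $\TT_i,\TT_j$, or computer enumeration for $t=2,3$) are stated at the level of intention rather than argument; note also that for fixed small $t$ the enumeration is over shifted $3$-wise $t$-intersecting families in $2^{[n]}$ for \emph{all} $n$, so it is not obviously finite without an additional reduction. The reduction via shifting and the decomposition by the rightmost hole are correctly reproduced from the paper, and the logical skeleton (four cases according to $\HH\subset\FF_0^t,\FF_1^t,\FF_2^t$, or none) is the right one; what is missing is precisely the mathematics that would distinguish a proof of the conjecture from a restatement of the paper's Theorem~\ref{thm5}.
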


It would be interesting to extend Theorem~\ref{thm5} to the
$r$-wise $t$-intersecting families. To this end we need to generalize
Lemma~\ref{lemma:matome}, and we conjecture the following.
\begin{conj}
Let $\GG\subset 2^{[n]}$ be an $(r,t)$-maximal family, and let
$\HH$ be a shifted family obtained from $\GG$ by shifting.
If $\HH\subset\FF_i^t(r)$, then $\GG\cong\FF_i^t(r)$.
\end{conj}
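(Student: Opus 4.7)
The plan is to follow the skeleton of the proof of Lemma~\ref{lemma:matome}, which handles $r=3$, $i\in\{0,1\}$, and extend the single-shift analyses of Lemmas~\ref{lemma:F0} and~\ref{lemma:F1} to arbitrary $r$ and $i$. Concretely, I would first prove the single-shift version: \emph{if $\GG$ is $(r,t)$-maximal and $\sigma_{a,b}(\GG)\subset\FF_i^t(r)$, then $\GG\cong\FF_i^t(r)$}. Granted this, the full conjecture follows by induction along a shifting sequence $\GG=\GG_0\to\cdots\to\GG_N=\HH$: one verifies that $\FF_i^t(r)$ is itself inclusion-maximal as an $r$-wise $t$-intersecting family (so $\HH\subset\FF_i^t(r)$ forces $\HH=\FF_i^t(r)$ when $\HH$ has the right cardinality), and that shifting preserves $(r,t)$-maximality, so the single-shift version applies at each step backwards.

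For the single-shift version, the cases $\{a,b\}\subset[t+ri]$ or $\{a,b\}\cap[t+ri]=\emptyset$ are trivial since $\sigma_{a,b}(\GG)=\GG$, so one may assume $a\in[t+ri]$ and $b\notin[t+ri]$. A set $G\in\GG$ moved by $s_{a,b}$ satisfies $G\cap\{a,b\}=\{b\}$ and $s_{a,b}(G)=(G\setminus\{b\})\cup\{a\}\in\FF_i^t(r)$, whence $|G\cap[t+ri]|\geq t+(r-1)i-1$; unmoved sets already lie in $\FF_i^t(r)$. Thus the sole obstructions to $\GG\subset\FF_i^t(r)$ are the \emph{witnesses}: moved sets $G$ with $|G\cap[t+ri]|=t+(r-1)i-1$. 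By size maximality each witness can be normalized to a canonical form indexed by (i) its hole pattern $A\subset[t+ri]\setminus\{a\}$ of size $i+1$, and (ii) the fact that it misses $a$ but contains $b$. Note that the `twin' $G^*:=(G\setminus\{b\})\cup\{a\}$ of a moved witness $G$ is in $\FF_i^t(r)$ but cannot lie in $\GG$ (else $s_{a,b}$ would leave $G$ fixed).

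The combinatorial core, generalizing the $\CC/\DD$ dichotomy in Lemma~\ref{lemma:F1}, is a case analysis on these witness hole patterns. When two witnesses with suitably complementary hole patterns coexist in $\GG$, one can assemble an $r$-tuple drawn from the witnesses together with shifted completions and any auxiliary `deep' set in $\GG$ (coming from $\HH\not=\FF_0^t$-type behaviour), whose triple-or-higher intersection has size $<t$ --- contradicting the $r$-wise $t$-intersecting property. Otherwise all witnesses share a common hole pattern, and $(r,t)$-maximality then pins $\GG$ to an isomorphic copy of $\FF_i^t(r)$ via the same kind of argument that closes off Cases~II and III in the proof of Lemma~\ref{lemma:F1} (adding the canonical minimum-sized set allowed by the pattern and checking that it does not violate $r$-wise $t$-intersection).

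The main obstacle is the combinatorial explosion as $i$ grows. For $r=3$, $i=1$ the analysis already splits into three subcases, and in general the number of relevant hole patterns is of order $\binom{ri+1}{i+1}$, with subtle interactions created when further shifts are applied to the witnesses. I expect the cleanest path forward is either (a) an induction on $i$, reducing level $i$ to level $i-1$ plus a boundary analysis for a single new hole, or (b) a conceptual reformulation of $\FF_i^t(r)$ as an extremal $r$-wise configuration (generalizing Hilton--Milner) in which the witness analysis can be carried out uniformly across hole patterns. Without one of these ingredients, the case-by-case method already strained in Lemma~\ref{lemma:F1} is unlikely to scale.
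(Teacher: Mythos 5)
This statement is not something the paper proves at all: it is posed as an open conjecture in the concluding remarks, precisely because the authors could only establish the cases $r=3$, $i\in\{0,1\}$ (Lemmas~\ref{lemma:F0} and~\ref{lemma:F1}, combined into Lemma~\ref{lemma:matome}). Your proposal correctly reconstructs the shape of that known argument -- a single-shift statement iterated along the shifting sequence, with the obstructions being ``witness'' sets whose trace on $[t+ri]$ has size exactly $t+(r-1)i-1$, and a dichotomy between conflicting hole patterns (which should force an $r$-tuple with intersection of size less than $t$) and a single common pattern (which maximality should pin down to a copy of $\FF_i^t(r)$). But the combinatorial core for general $r$ and $i$ is never carried out: you describe it, note the case explosion, propose two possible remedies (induction on $i$, or a uniform reformulation), and conclude that without one of them the method ``is unlikely to scale.'' That missing core is exactly the open content of the conjecture, so what you have is a plan, not a proof.

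Beyond the gap you acknowledge, two concrete points need attention. First, your reduction to the single-shift version requires that every intermediate family in the shifting sequence is again $(r,t)$-maximal, i.e.\ that $\sigma_{a,b}$ preserves size-maximality; you only say ``one verifies'' this. It is not immediate: if $F\notin\sigma_{a,b}(\GG)$ and one pushes a witness tuple $G_2,\dots,G_r\in\GG$ with $|F\cap G_2\cap\dots\cap G_r|<t$ through the shift, the intersection with $F$ can gain the element $a$, so the obvious argument only yields size at most $t$, not less than $t$; a real argument (or a route that avoids intermediate maximality, e.g.\ exploiting that the single-shift lemmas conclude $\HH=\FF_i^t$ exactly) is needed, and the paper does not supply one for more than one shift either. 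Second, some bookkeeping is off: a moved witness $G$ misses $a$, so its hole is a set of size $i+1$ \emph{containing} $a$, not a subset of $[t+ri]\setminus\{a\}$ of size $i+1$; and since $\GG$ is not assumed shifted, the holes cannot be normalized to the right end, so the number of relevant patterns is of order $\binom{t+ri-1}{i}$, growing with $t$ (compare the roughly $t$ sets in each of $\CC$ and $\DD$ in the proof of Lemma~\ref{lemma:F1}), rather than $\binom{ri+1}{i+1}$. This matters because your ``complementary patterns'' step and the auxiliary ``deep'' set you invoke must be produced by $(r,t)$-maximality separately in each configuration -- Case~II of Lemma~\ref{lemma:F1} shows how delicate this already is for $r=3$, $i=1$.
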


For $i\geq 0$ define $p_i=p_i(t)$ by the minimum positive root of the equation 
$\mu_{x}(\FF_i^t)=\mu_{x}(\FF_{i+1}^t)$ in variable $x$.
It follows that
\[
p_i=\sqrt{\frac{i+1}t}+\frac{i+1}{2t}+O(t^{-\frac32}),\quad
\lim_{t\to 0}\frac{\mu_{p_i}(\FF_{i+2}^t)}{\mu_{p_i}(\FF_{i}^t)}=\frac{i+1}{i+2}.
\]
Finally we ask whether Theorem~\ref{thm5} can be extended to larger $p$ as follows.
For convenience let $\mu_p(\FF_{-1})=0$. 
\begin{prob}
Is the following statement true? Let $i\geq 1$. 
For every $\delta>0$ there exists $t_0$ such that
for all $t>t_0$ and $p$ with $p_{i-1}(t)\leq p\leq p_i(t)$, 
the following holds. If $\GG\subset 2^{[n]}$ is a $3$-wise 
$t$-intersecting family with
\[
 \mu_p(\GG)>(1+\delta)\max\left\{\frac{i-1}i\,\mu_p(\FF_{i-2}),\,
\frac{i+1}{i+2}\,\mu_p(\FF_{i+2})\right\},
\]
then $\GG\subset\FF$ for some $\FF\cong\FF_j^t$ where $j\in\{i-1,i,i+1\}$.
\end{prob}

\section*{Acknowledgment}
The author thanks the referees for their careful reading and many helpful suggestions,
in particular, for pointing out the error in the proof of Lemma~\ref{lemma:F1} in the
earlier version, and for correcting the error in the proof of Lemma~\ref{lemma:h middle}.
This research was supported by JSPS KAKENHI Grant No.~23K03201.

\end{document}